\documentclass[11pt]{amsart}
\setlength{\oddsidemargin}{-0in}
\setlength{\evensidemargin}{-0in}
\setlength{\textwidth}{6.5in}

\pagestyle{headings}
 
\usepackage{amsmath, amsthm, amscd, amssymb,amsfonts}
 \usepackage{color}
 \usepackage{mathrsfs}
 \usepackage[all]{xy}
 \usepackage{hyperref}

\newtheorem{thm}{Theorem}[section]
\newtheorem*{thmA}{Theorem A}
\newtheorem*{thmB}{Theorem B}
\newtheorem*{thmC}{Theorem C}
\newtheorem*{thmD}{Theorem D}
\newtheorem{cor}[thm]{Corollary}
\newtheorem*{cor*}{Corollary}
\newtheorem{lem}[thm]{Lemma}
\newtheorem{prop}[thm]{Proposition}

\newtheorem*{con*}{Conjecture}
\newtheorem*{prob*}{Problem}

\theoremstyle{definition}
\newtheorem{defn}[thm]{Definition}

\theoremstyle{remark}
\newtheorem{rem}[thm]{Remark}

\begin{document}
\title{Revisiting Farrell's nonfiniteness of Nil}

\author[Jean-Fran\c{c}ois Lafont]{Jean-Fran\c{c}ois Lafont$^\dagger$}
\address{Department of Mathematics
The Ohio State University,
Columbus, Ohio, 43210 U.S.A.}
\email{jlafont@math.ohio-state.edu}

\author[Stratos Prassidis]{Stratos Prassidis$^*$}
\address{Department of Mathematics,
University of the Aegean,
Karlovassi, Samos, 83200 Greece}
\email{prasside@aegean.gr}

\author[Kun Wang]{Kun Wang$^\star$}
\address{Department of Mathematics
The Ohio State University,
Columbus, Ohio, 43210 U.S.A.}
\email{kwang@math.ohio-state.edu}

\thanks{$^\dagger$ The research of the first author is partially supported by the NSF, under grant DMS-1207782}
\thanks{$^*$The research of the second author has been co-financed by the European Union (European Social Fund - ESF) and Greek national funds through the Operational Program ``Education and Lifelong Learning" of the National Strategic Reference Framework (NSRF) - Research Funding Program: THALIS}
\thanks{$^\star$The research of the third author is partially supported by an Ohio State University Presidential 
Fellowship award.}

\keywords{Nil-groups}

\begin{abstract}
We study Farrell Nil-groups associated to a finite order automorphism of a ring $R$. 
We show that any such Farrell Nil-group is either trivial, or infinitely generated (as an abelian
group). Building on this first result, we then show that any finite group that occurs in such a Farrell Nil-group 
occurs with infinite multiplicity. 
If the original finite group is a direct summand, then the countably infinite sum of the finite subgroup also appears 
as a direct summand. We use this to deduce a structure theorem for countable Farrell Nil-groups with finite exponent.
Finally, as an application, we show that if $V$ is any virtually cyclic group, 
then the associated Farrell or Waldhausen Nil-groups can always be expressed as a countably infinite sum of 
copies of a finite group, provided they have finite exponent (which is always the case in dimension $0$).
\end{abstract}

\maketitle

\section{Introduction}

For a ring $R$ and an automorphism $\alpha: R\rightarrow R$, one can form the twisted polynomial ring
$R_\alpha[t]$, which as an additive group coincides with the polynomial ring $R[t]$, but with product given by
$(rt^i)(st^j) = r\alpha^{-i}(s)t^{i+j}$. 
There is a natural augmentation map $\varepsilon : R_\alpha[t] \rightarrow R$ induced by setting 
$\varepsilon (t) =0$. For $i\in \mathbb Z$, the {\it Farrell twisted Nil-groups}  $NK_i(R, \alpha) := \ker ({\varepsilon}_*)$
are defined to be the kernels of the induced $K$-theory map $\varepsilon_*: K_i(R_\alpha[t]) \rightarrow K_i(R)$. 
This induced map is split injective, hence $NK_i(R, \alpha)$ can be viewed as a 
direct summand in $K_i(R_\alpha[t])$. In the special case where the automorphism $\alpha$ is the identity,
the ring $R_\alpha[t]$ is just the ordinary polynomial ring $R[t]$, and the Farrell twisted Nil reduces to the
ordinary {\it Bass Nil-groups}, which we just denote by $NK_i(R)$. We establish the following:

\begin{thmA}\label{main} Let $R$ be a ring, $\alpha:R\rightarrow R$ a ring automorphism of finite order, and
$i \in \mathbb Z$. Then $NK_i(R, \alpha)$ is either trivial, or infinitely generated as an abelian group.
\end{thmA}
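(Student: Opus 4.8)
The plan is to adapt Farrell's original strategy, the essential new point being to construct Verschiebung and Frobenius operators on $NK_i(R,\alpha)$ in the presence of the finite-order twist. Write $d$ for the order of $\alpha$ and $N := NK_i(R,\alpha)$; I will assume $N\neq 0$ and derive a contradiction from the hypothesis that $N$ is finitely generated as an abelian group. For an integer $q$, the assignment $t\mapsto t^q$ (identity on $R$) extends to a ring endomorphism $\psi_q$ of $R_\alpha[t]$ precisely when it is compatible with the twisted multiplication, and a one-line check shows this holds exactly when $\alpha^{q-1}=\mathrm{id}$, i.e. when $q\equiv 1\pmod d$. For such $q$ the map $\psi_q$ is injective and exhibits $R_\alpha[t]$ as a free module of rank $q$ over its image $\psi_q(R_\alpha[t])\cong R_\alpha[t^q]$ — here one uses that $t^d$, hence $t^q$, is central in $R_\alpha[t]$. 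Pullback along $\psi_q$ gives $V_q:=(\psi_q)_*:K_i(R_\alpha[t])\to K_i(R_\alpha[t])$ and the associated transfer gives $F_q:K_i(R_\alpha[t])\to K_i(R_\alpha[t])$; since $\varepsilon\circ\psi_q=\varepsilon$, both operators preserve $N=\ker\varepsilon_*$. Exactly as in the untwisted arguments of Grayson and Weibel one verifies the relations $V_mV_n=V_{mn}$, $F_mF_n=F_{mn}$, $F_qV_q=q\cdot\mathrm{id}_N$, and $F_mV_n=V_nF_m$ when $\gcd(m,n)=1$, together with the fact that $V_q$ raises the $t$-adic (nilpotent) filtration $N=F^1N\supseteq F^2N\supseteq\cdots$ in the sense that $V_q(F^kN)\subseteq F^{qk}N$. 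Finally, by Dirichlet's theorem there are infinitely many primes $q\equiv 1\pmod d$, so infinitely many operators $V_q,F_q$ are available; and one records the key structural input — continuity of the big-Witt-vector module structure — in the form that this filtration is separated: $\bigcap_{k}F^kN=0$.

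The heart of the argument is the following claim, which exploits finite generation: every element $w\in N$ can be written as a finite sum $w=\sum_{q\in S}V_q(w_q)$, with $w_q\in N$ and $S$ a finite set of primes $\equiv 1\pmod d$. Indeed, the subgroup generated by all $V_q(w)$ with $q$ a prime satisfying $q\equiv 1\pmod d$ is a subgroup of the finitely generated group $N$, hence is itself finitely generated, hence equals $\langle V_q(w):q\in S\rangle$ for some finite $S$. For any prime $q'\equiv 1\pmod d$ with $q'\notin S$ we may write $V_{q'}(w)=\sum_{q\in S}c_qV_q(w)$; applying $F_{q'}$ and using $F_{q'}V_{q'}=q'\cdot\mathrm{id}$ together with $F_{q'}V_q=V_qF_{q'}$ for $q\neq q'$ gives $q'w=\sum_{q\in S}c_qV_q(F_{q'}(w))\in\sum_{q\in S}V_q(N)=:M$. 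Thus $q'w\in M$ for every such prime $q'$, and since (by Dirichlet) there are two coprime ones, $w\in M$, which proves the claim. Now iterate, starting from $0\neq x\in N$: after $k$ applications of the claim (using $V_aV_b=V_{ab}$ to collapse compositions) one obtains an expression $x=\sum_\tau V_{n_\tau}(z_\tau)$, a finite sum in which each $n_\tau$ is a product of $k$ primes $\equiv 1\pmod d$. Since each such prime is $\geq 2$, we get $n_\tau\geq 2^k$, hence $x\in\sum_\tau F^{n_\tau}N\subseteq F^{2^k}N$ for every $k$, and therefore $x\in\bigcap_k F^{2^k}N=\bigcap_k F^kN=0$ by separatedness — contradicting $x\neq 0$. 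Hence $N$ is either trivial or infinitely generated.

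When $i<0$ the $K$-groups and operators are not given geometrically, so I would first re-derive the constructions and relations above from Bass's definition of negative $K$-theory via iterated cokernels of Bass–Heller–Swan maps, checking that $F_q,V_q$ are compatible with the deloopings and that the filtration statement descends to all degrees. The main obstacle is precisely the separatedness of the nilpotent filtration on $NK_i(R,\alpha)$ — equivalently, the continuity of its big-Witt-vector module structure. This is the structural fact that genuinely distinguishes Nil-groups from arbitrary abelian groups carrying operators with the above formal relations (one can write down finitely generated nonzero abelian groups with such operators, so the relations alone are not enough), and establishing it uniformly in $i\in\mathbb Z$ is where the real work lies; by comparison the twisted bookkeeping — the constraint $q\equiv 1\pmod d$ and the appeal to Dirichlet — is routine.
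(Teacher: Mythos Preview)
Your argument has a genuine gap at its core: the identity $F_qV_q=q\cdot\mathrm{id}_N$ that you assert ``exactly as in the untwisted arguments'' is \emph{false} once $\alpha\neq\mathrm{id}$, even under your constraint $q\equiv 1\pmod d$. Concretely, writing $A=R_\alpha[t]$ and computing the composite ``extend then restrict'' along $\psi_q$, one finds that the relevant $(A,A)$-bimodule ${}_{\psi_q}A_{\psi_q}$ decomposes as $\bigoplus_{j=0}^{q-1}A_{\alpha^{-j}}$ rather than as $A^{\oplus q}$: the basis element $t^j$ satisfies $t^j\cdot r=\alpha^{-j}(r)\,t^j$, so the right $R$-action on the $j$-th summand is twisted by $\alpha^{-j}$. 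On $K$-theory this gives $F_qV_q=\sum_{j=0}^{q-1}T_j$, where $T_j$ is the automorphism induced by the twist $(P,f)\mapsto(P_{\alpha^{-j}},f)$, and there is no reason for the $T_j$ to act as the identity on $NK_i(R,\alpha)$. This is precisely the pitfall flagged in the Remark following the statement of Theorem~A in the paper (``the identity $G_n=\mu_n$ does not hold in the twisted case\ldots $\oplus_nA$ and $B$ are not isomorphic \emph{as bimodules}''). Your step ``applying $F_{q'}$ and using $F_{q'}V_{q'}=q'\cdot\mathrm{id}$'' therefore collapses. (A smaller slip in the same paragraph: $t^q$ is \emph{not} central when $q\equiv 1\pmod d$; one computes $t^qr=\alpha^{-1}(r)\,t^q$. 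Only $t^d$ is central. Freeness of $A$ over $\psi_q(A)$ does not need centrality, so this error is not fatal by itself.)

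The paper's route around this obstruction is to exploit the $d$-periodicity of the twist operators $T_j$ rather than to pretend they are trivial. Writing $G_m:=F_mV_m$ (with the paper's indexing, so the dilation is $mn+1$ where $n=|\alpha|$), Lemma~\ref{relation} and additivity give $K_j(G_m)=\mathrm{id}+m\sum_{i=1}^{n}K_j(T_i)$; squaring and using $T_iT_l=T_{i+l}$ together with $n$-periodicity yields the quadratic identity $(2+mn)\,K_j(G_m)-K_j(G_m)^2=\mu_{1+mn}$. This replaces your ``$F_qV_q=q$'' and is exactly strong enough to run Farrell's argument: for infinitely many $m$ the map $\mu_{1+mn}$ is injective on a putatively finitely generated $N$, so $G_m=F_mV_m$ is injective, hence $F_m\neq 0$; but Grunewald's vanishing $F_m(x)=0$ for $m\gg 0$ forces $F_m\equiv 0$ on a finitely generated $N$ for large $m$, a contradiction. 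If you want to salvage your filtration/Dirichlet strategy, you would need to replace every use of $F_qV_q=q$ by this quadratic relation (or by the equivalent statement $G_m=\mathrm{id}+m\Sigma$ with $\Sigma^2=n\Sigma$), and check that your commutation relation $F_mV_n=V_nF_m$ survives the twist --- which is not obvious either.
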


\vskip 10pt

The proof of this result relies heavily on a method developed by Farrell \cite{fa}, who first showed in 1977 
that the lower 
Bass Nil-groups $NK_*(R)$ with $*\leq 1$, are always either trivial, or infinitely generated. This result was subsequently 
extended to the higher Bass Nil-groups 
$NK_*(R)$ with $*\geq 1$ by Prasolov \cite{pr} (see also van der Kallen \cite{ka}). For Farrell's twisted Nils, when the
automorphism $\alpha$ has finite order, Grunewald \cite{gr-agt} and Ramos \cite{ra} independently established 
the corresponding result for $NK_*(R, \alpha)$ when $*\leq 1$. All these papers used the same basic idea, which
we call {\it Farrell's Lemma}. We exploit the same idea, and establish our own version of Farrell's Lemma (and
prove the theorem) in Section \ref{non-finiteness}. 

\begin{rem} Farrell's original proof of his lemma used the transfer map on $K$-theory. Na\"ively, one might want to 
try to prove {\bf Theorem A} as follows: choose n so that $\alpha^n=\alpha$. Then
there is a ring homomorphism from $A=R_\alpha[t]$
to $B=R_\alpha[s]$ sending $t \mapsto s^n$. Call the
induced map on $K$-theory $F_n:K(A) \rightarrow K(B)$. Since $B$ is a free (left)
$A$-module of rank $n$, the transfer map $V_n$ is
also defined, and $G_n:=V_n\circ F_n=\mu _n$ (multiplication by $n$). 
Then follow Farrell's original 1977 argument verbatim to conclude the proof. Unfortunately this approach does not work, 
for two reasons. 

Firstly, the identity $G_n =\mu_n$ does not hold in the
twisted case (basically due to the fact that $\oplus _nA$ and $B$ are not isomorphic {\it as bimodules}). We do
not explicitly know what the map $G_n$ does on $K$-theory, but it is definitely {\bf not} multiplication by an integer.
Instead, we have the somewhat more complicated identity given in part (2) of our Lemma \ref{farrell-lemma}, but which
is still sufficient to establish the Theorem. 

Secondly, while it is possible to derive the identity in part (2) of Lemma \ref{farrell-lemma} using the transfer map as in 
Farrell's original argument, it is not at all clear how to obtain the analogue of part (3) in higher dimensions by working at
the level of 
K-theory groups. Instead, we have to work at the level of categories, specifically, with the \textit{Nil-category} 
$NIL(R;\alpha)$ (see Section \ref{nilcat}), in order to ensure property (3). The details are in \cite{gr-topology}.

\end{rem}


Next we refine somewhat the information we have on these Farrell Nils, by focusing on the finite subgroups
arising as direct summands. In section \ref{fin-sgps}, we establish:

\begin{thmB}\label{main} Let $R$ be a ring, $\alpha:R\rightarrow R$ a ring automorphism of finite order, and
$i \in \mathbb Z$. If $H\leq NK_i(R, \alpha)$ is a finite subgroup, then $\bigoplus_{\infty} H$ also appears
as a subgroup of  $NK_i(R,\alpha)$. Moreover, if $H$ is a direct summand in $NK_i(R, \alpha)$, then so is 
$\bigoplus_{\infty} H$. 
\end{thmB}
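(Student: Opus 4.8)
The plan is to leverage Theorem A together with the algebraic structure of $NK_i(R,\alpha)$ as a module over a suitable ring, combined with a pigeonhole-type argument on the finitely many iterates of the Verschiebung/Frobenius-type operators coming from Farrell's Lemma. First I would recall that $NK_i(R,\alpha)$ carries natural operations: the endomorphisms $F_n$ (induced by $t\mapsto t^n$) and, in the twisted finite-order setting, a twisted analogue $V_n$ of the Verschiebung, and that by part (2) of Lemma~\ref{farrell-lemma} the composite $G_n = V_n\circ F_n$ is not multiplication by $n$ but is still ``close enough'' — in particular, Theorem A is proved by showing that if $NK_i(R,\alpha)$ were finitely generated then $G_n$ would force it to be trivial. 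I would re-examine that argument: it actually produces, for a nontrivial element $x$, an infinite family of elements whose span is infinitely generated, and the same machinery, applied to a generator of the finite subgroup $H$, should produce infinitely many ``translates'' of $H$.

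The key steps, in order, are as follows. (1) Fix the finite subgroup $H\leq NK_i(R,\alpha)$ and a nonzero $x\in H$. Using part (3) of Lemma~\ref{farrell-lemma} (the category-level refinement), locate an element whose image under the relevant operators has controlled support, so that the iterates $x, G x, G^2 x,\dots$ (for an appropriate operator $G$ built from $F_n$ and $V_n$) are ``independent enough.'' (2) Show that the subgroup generated by all $G^k(H)$, $k\geq 0$, is the direct sum $\bigoplus_\infty H'$ for some nonzero quotient $H'$ of $H$; then bootstrap from $H'$ back to $H$ using that $H$ is finite (so only finitely many possibilities for $H'$, and one can pass to a subgroup of finite index in the index set to recover a full copy of $H$, or iterate the construction on the complementary summand). (3) For the direct-summand statement, assume $NK_i(R,\alpha) = H\oplus C$. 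Apply the construction to produce $\bigoplus_\infty H$ inside $NK_i(R,\alpha)$; then show this copy is complemented. Here I would use that the operators $F_n, V_n$ are compatible with the splitting $NK_i(R,\alpha) = \ker(\varepsilon_*)$ inside $K_i(R_\alpha[t])$, and that a countable direct sum of copies of a finite (hence finitely presented) group which sits ``operator-equivariantly'' inside the whole group splits off — essentially an Eilenberg swindle packaged with the operator $G$: writing $\bigoplus_\infty H \cong H\oplus(\bigoplus_\infty H)$ and absorbing the discrepancy.

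The main obstacle I anticipate is Step (2): controlling the iterates $G^k(x)$ precisely enough to see a genuine direct sum rather than merely an infinitely generated subgroup. In the untwisted Bass case one has $G_n = \mu_n$, and the Nil-group is a module over the big Witt vectors / Verschiebung operators, which makes ``disjointness of supports'' transparent; in Farrell's twisted finite-order case $G_n$ is a more complicated operator and one only knows the weaker identity of Lemma~\ref{farrell-lemma}(2). I would handle this by choosing $n$ with $\alpha^n = \mathrm{id}$ (or $\alpha^n=\alpha$), so that high powers of the operators interact with the $\mathbb{Z}/|\alpha|$-grading in a periodic way, and then pass to a subsequence of exponents $n, n^2, n^3, \dots$ along which the ``error terms'' in Lemma~\ref{farrell-lemma}(2) land in a controlled filtration — precisely the mechanism already used to prove Theorem A. The finiteness of $H$ is what converts ``infinitely generated span of translates of $H$'' into ``$\bigoplus_\infty$ of a fixed finite group'': an infinitely generated abelian group all of whose elements lie in finitely many cosets of translates of a fixed finite group must contain $\bigoplus_\infty H'$ for some nonzero subquotient $H'$ of $H$, and a further finite iteration recovers $H$ itself on a cofinite sub-index-set.
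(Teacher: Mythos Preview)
Your proposal has the right raw materials (the operators $V_m$, $F_m$, the identity from Lemma~\ref{farrell-lemma}(2), and the vanishing in Lemma~\ref{farrell-lemma}(3)), but the way you assemble them misses the actual mechanism, and Step~(3) is genuinely wrong.

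First, the disjointness argument. You propose to iterate a single operator $G$ and study $G^k(H)$, hoping the span is $\bigoplus_\infty H'$ for some subquotient $H'$, then ``bootstrap'' back to $H$. This bootstrap is never justified and is not needed. The paper's argument is cleaner and avoids it: since $H$ is finite, part~(3) of Lemma~\ref{farrell-lemma} gives an $r(H)$ with $F_m(H)=0$ for all $m>r(H)$. Take the infinite set $S$ of $m>r(H)$ with $\gcd(1+mn,|H|)=1$; for such $m$ the identity $(2+mn)G_m-G_m^2=\mu_{1+mn}$ forces $V_m|_H$ to be injective, so $H_m:=V_m(H)\cong H$ exactly, not merely a subquotient. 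Now a pigeonhole argument on the \emph{parameter} $m$ (not on iterates of one operator): if $H_m\cap H\neq\{0\}$ for every $m\in S$, finiteness of $H$ gives a fixed nonzero $x\in H$ and $y\in H$ with $V_m(y)=x$ for infinitely many $m$; applying $(\mu_{2+mn}-G_m)\circ F_m$ to both sides yields $(1+mn)y$ on one side and $0$ on the other (since $F_m(x)=0$), contradicting the gcd condition. So some $H_m$ is disjoint from $H$, giving $H\oplus H$; now rerun the whole argument with $H\oplus H$ in place of $H$. There is no single operator being iterated, and no $H'$ to recover $H$ from.

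Second, the direct-summand part. An Eilenberg swindle does not do what you need: the identity $H\oplus\bigoplus_\infty H\cong\bigoplus_\infty H$ is a statement about the abstract group, not about its embedding in $NK_i(R,\alpha)$, and ``operator-equivariance'' does not produce a complement. The paper instead builds an explicit retraction $\lambda:NK_i(R,\alpha)\to H_m$ as the composite
\[
\mu_l\circ (V_m|_H)\circ \rho\circ(\mu_{2+mn}-G_m)\circ F_m,
\]
where $\rho$ is the given retraction onto $H$ and $l$ is chosen so that $\mu_l\mu_{1+mn}=\mathrm{id}$ on $H_m$. One checks $\lambda|_{H_m}=\mathrm{id}$ using the Farrell identity, and $\lambda(H)=0$ since $F_m(H)=0$. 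The elementary splitting Lemma~\ref{splitting-lemma} then upgrades the two retractions $\rho,\lambda$ with $\lambda(H)=0$ to a retraction onto $H\oplus H_m'$ (after replacing $H_m$ by an isomorphic copy), so $H\oplus H$ is a summand extending the original $H$. Iterate. This construction of $\lambda$ from $\rho$ and the operators is the missing idea in your Step~(3).
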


In the statement above, and throughout the paper, $\oplus_{\infty} H$ denotes the direct sum of countably infinitely
many copies of the group $H$.
\textbf{Theorem B} together with some group theoretic facts enable 
us to deduce a structure theorem for certain Farrell Nil-groups. In section \ref{Nils-VC}, we prove:

\begin{thmC} Let $R$ be a countable ring, $\alpha: R\rightarrow R$ a ring automorphism of finite order, and $i\in\mathbb Z$. If $NK_i(R,\alpha)$ has finite exponent, then there exists a finite abelian group $H$, so that
$NK_i(R,\alpha)\cong\bigoplus_{\infty} H$.
\end{thmC}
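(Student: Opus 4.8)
The plan is to leverage Theorem B together with standard structure theory for abelian groups of finite exponent. First I would observe that $N := NK_i(R,\alpha)$ is a countable abelian group: since $R$ is countable, the twisted polynomial ring $R_\alpha[t]$ is countable, and hence $K_i(R_\alpha[t])$ is countable (the $K$-groups of a countable ring are countable, as they are built from countably many generators and relations coming from finitely-generated projective modules / automorphisms over a countable ring), so the subgroup $N$ is countable as well. If $N$ is trivial, take $H = 0$ and we are done; so assume $N \neq 0$, and by Theorem A, $N$ is infinitely generated. Let $e$ be the exponent of $N$.

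Next I would invoke the classification of abelian groups of bounded exponent: by Prüfer's theorem, any abelian group of finite exponent is a direct sum of cyclic groups. Hence $N \cong \bigoplus_{p,k} \bigoplus_{I_{p,k}} \mathbb Z/p^k$, where $p$ ranges over the (finitely many) primes dividing $e$, the integer $k$ is bounded by the $p$-adic valuation of $e$, and each index set $I_{p,k}$ is at most countable. Since $N$ is countable and infinitely generated, at least one index set $I_{p_0,k_0}$ must be countably infinite. The key step is then to apply Theorem B to the finite subgroup $H_0 := \mathbb Z/p_0^{k_0} \leq N$: because $\mathbb Z/p_0^{k_0}$ is visibly a direct summand of $N$ (it is one of the cyclic factors in the Prüfer decomposition), Theorem B guarantees that $\bigoplus_\infty \mathbb Z/p_0^{k_0}$ is also a direct summand of $N$.

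To finish, I would set $H := \bigoplus_{p,k} (\mathbb Z/p^k)^{\oplus m_{p,k}}$ where $m_{p,k} = |I_{p,k}|$ if that index set is finite and $m_{p,k}$ is chosen to be, say, $0$ for the infinite ones — more carefully, I would argue as follows. Write $N = H_0^{\oplus \infty} \oplus C$ using Theorem B. Now $C$ is again a countable abelian group of finite exponent, so $C \cong \bigoplus_{p,k} (\mathbb Z/p^k)^{\oplus J_{p,k}}$. Each $J_{p,k}$ is either finite or countably infinite; for every $(p,k)$ with $J_{p,k}$ infinite, split off a $(\mathbb Z/p^k)^{\oplus\infty}$ summand (again legitimate, either directly or via another application of Theorem B), absorbing the remaining finite part, and after repeating over the finitely many relevant pairs $(p,k)$ we arrive at $N \cong \left(\bigoplus_{p,k} (\mathbb Z/p^k)^{\oplus \infty}\right) \oplus F$ for a \emph{finite} abelian group $F$, where the first sum runs over a finite set of pairs. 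Writing $D := \bigoplus_{p,k} (\mathbb Z/p^k)^{\oplus\infty}$ (finite sum over $(p,k)$), one checks the purely group-theoretic identity $D \cong \bigoplus_\infty D_1$ where $D_1 = \bigoplus_{p,k}\mathbb Z/p^k$ is finite, and moreover $D \oplus F \cong \bigoplus_\infty (D_1 \oplus F)$ by a standard Eilenberg-swindle-style regrouping (shuffle the countably many copies of $D_1$ into countably many blocks, attaching the single copy of $F$ to the first block and noting $F \oplus \bigoplus_\infty D_1 \cong \bigoplus_\infty D_1$ when $F$'s exponent divides that of $D_1$, or otherwise enlarge $D_1$ to $D_1 \oplus F$). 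Taking $H := D_1 \oplus F$ gives $N \cong \bigoplus_\infty H$ with $H$ a finite abelian group.

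The main obstacle I anticipate is not conceptual but bookkeeping: one must be careful that all the decompositions used (Prüfer's theorem, splitting off infinite summands, the final swindle) respect the direct-summand hypothesis required to invoke Theorem B, and that the finitely many "leftover" finite pieces across different primes are correctly absorbed into a single finite group $H$ without accidentally changing the exponent in a way that breaks the regrouping. A clean way to sidestep delicate exponent comparisons is to define $H$ to be $\bigoplus_{p,k}\mathbb Z/p^k$ ranging over \emph{all} pairs $(p,k)$ with $J_{p,k} \neq \emptyset$ \emph{or} $I_{p,k}$ infinite, i.e. make $H$ large enough to contain every cyclic type that appears anywhere, so that $\bigoplus_\infty H$ manifestly contains and is contained in $N$ as summands; then a two-sided summand inclusion plus a swindle yields the isomorphism.
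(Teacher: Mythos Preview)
Your proposal uses the right ingredients --- countability of $NK_i(R,\alpha)$, Pr\"ufer's theorem, and Theorem B --- but it is considerably more convoluted than necessary, and the ``swindle'' step contains a genuine gap. You want $\bigoplus_\infty D_1 \oplus F \cong \bigoplus_\infty(D_1\oplus F)$, but this fails whenever $F$ contains a cyclic factor $\mathbb Z/p^k$ not already present in $D_1$: the left side then has only finitely many $\mathbb Z/p^k$-summands while the right side has infinitely many. Your final paragraph tries to patch this by enlarging $H$ to contain every cyclic type that appears, and asserting that $\bigoplus_\infty H$ is then a summand of $N$; but that assertion is precisely what is at stake --- you have only shown that \emph{some} cyclic types occur infinitely often, not all of them, so you cannot yet conclude $\bigoplus_\infty H \hookrightarrow N$ as a summand.

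The paper's argument is much shorter and sidesteps all of this. After Pr\"ufer, write $N \cong \bigoplus_{(p,k)} (\mathbb Z/p^k)^{\oplus I_{p,k}}$. Now apply Theorem B not to a single cyclic summand, but to \emph{each} cyclic type $\mathbb Z/p^k$ that appears (there are only finitely many, since the exponent is finite). Each such $\mathbb Z/p^k$ is visibly a direct summand of $N$, so Theorem B forces $\bigoplus_\infty \mathbb Z/p^k$ to be a summand as well; by uniqueness of the Pr\"ufer multiplicities this means every nonempty $I_{p,k}$ is already countably infinite. Setting $H=\bigoplus_{(p,k)\,:\,I_{p,k}\neq\emptyset}\mathbb Z/p^k$ immediately gives $N\cong\bigoplus_\infty H$, with no iteration, no leftover finite piece, and no swindle. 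The moral is that Theorem B should be invoked once per cyclic type at the outset, rather than once to start an inductive splitting procedure.
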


A straightforward corollary of \textbf{Theorem C} is the following:

\begin{cor} \label{nk0}
Let $G$ be a finite group, $\alpha\in Aut(G)$. Then there exists a finite abelian group $H$, whose exponent divides some power of $|G|$, with the property that $NK_0(\mathbb Z G,\alpha)\cong\bigoplus_{\infty}H$.
\end{cor}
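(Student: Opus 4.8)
The plan is to obtain Corollary~\ref{nk0} by applying \textbf{Theorem C} to the ring $R=\mathbb{Z}G$ together with the ring automorphism induced by $\alpha\in\mathrm{Aut}(G)$, which I continue to call $\alpha$. Two of the three hypotheses of \textbf{Theorem C} are immediate: $\mathbb{Z}G$ is countable because $G$ is finite, and $\alpha$ has finite order because $\mathrm{Aut}(G)$ is finite, so the induced ring automorphism has order dividing $|\mathrm{Aut}(G)|$. Everything therefore reduces to the third hypothesis, that $NK_0(\mathbb{Z}G,\alpha)$ has finite exponent, together with the sharper statement that this exponent divides a power of $|G|$: granting this, \textbf{Theorem C} produces a finite abelian group $H$ with $NK_0(\mathbb{Z}G,\alpha)\cong\bigoplus_\infty H$, and then $\exp(H)=\exp\!\big(NK_0(\mathbb{Z}G,\alpha)\big)$ divides a power of $|G|$, which is the asserted bound.

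To control the exponent I would argue in two steps. First, since $\alpha$ fixes $\mathbb{Z}\subseteq\mathbb{Z}G$ it extends $\alpha$-equivariantly to each localization $\mathbb{Z}_{(p)}G$ and to $\mathbb{Q}G$, and $NK_0(-,\alpha)$ commutes with these filtered colimits of rings. As $\mathbb{Q}G$ is semisimple and, for $p\nmid|G|$, $\mathbb{Z}_{(p)}G$ is a hereditary (hence regular) order by Maschke's theorem, their twisted Nil-groups vanish; this forces $NK_0(\mathbb{Z}G,\alpha)$ to be a torsion group with trivial $p$-primary part for every $p\nmid|G|$, i.e.\ it is $|G|^\infty$-torsion. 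Second, to upgrade this to a uniform bound I would choose an $\alpha$-invariant regular order $\Gamma$ with $\mathbb{Z}G\subseteq\Gamma\subseteq\mathbb{Q}G$ (an $\alpha$-invariant hereditary order, produced by a Bruhat--Tits fixed-point argument prime by prime and then patched) and run the twisted Mayer--Vietoris sequence attached to the conductor square relating $\mathbb{Z}G$, $\Gamma$, and their quotients by the conductor $\mathfrak{f}$. Since $\mathfrak{f}$ is supported only at primes dividing $|G|$, the rings $\mathbb{Z}G/\mathfrak{f}$ and $\Gamma/\mathfrak{f}$ are finite of order a power of $|G|$; because $NK_*(\Gamma,\alpha)=0$ ($\Gamma$ being regular) and $NK_0$ of any finite ring vanishes (its radical is nilpotent and the quotient semisimple), the sequence exhibits $NK_0(\mathbb{Z}G,\alpha)$ as a quotient of $NK_1$ of the finite ring $\Gamma/\mathfrak{f}$. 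Finally, $NK_1$ of a finite ring of characteristic $m$ is annihilated by a power of $m$ — the relevant part of $K_1$ is a quotient of the units $1+\nu$ with $\nu$ ranging over a nilpotent ideal, and $(1+\nu)^{m^s}=1$ once $s$ is large — so $NK_0(\mathbb{Z}G,\alpha)$ is annihilated by a power of $|G|$, as needed.

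The main obstacle is precisely this exponent bound. The $|G|^\infty$-torsion statement is soft, but extracting a \emph{uniform} exponent is not formal; the delicate point is the production of an $\alpha$-invariant regular overring of $\mathbb{Z}G$, and if one prefers to avoid it one can instead bound the exponent of each $p$-primary summand $NK_0(\mathbb{Z}_{(p)}G,\alpha)$ separately, using the local conductor square where the fixed-point argument is cleaner, or simply invoke the corresponding finite-exponent fact for group rings of finite groups. Once the exponent is in hand, the corollary follows from \textbf{Theorem C} with no further work.
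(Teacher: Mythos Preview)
Your overall strategy matches the paper's: verify that $R=\mathbb ZG$ with the induced automorphism satisfies the hypotheses of \textbf{Theorem C} (countable ring, finite-order automorphism, finite exponent), and observe that the exponent is $|G|$-primary. The paper's proof is two sentences long because it simply \emph{cites} the two needed inputs: Connolly--Prassidis \cite{cp} for the finite exponent of $NK_0(\mathbb ZG,\alpha)$, and Kuku--Tang \cite{kt} for the $|G|$-primary torsion. You instead attempt to reprove both facts from scratch.

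Your localization argument for $|G|$-primary torsion is correct and is essentially the Kuku--Tang argument. Your conductor-square approach to the uniform exponent bound is the Connolly--Prassidis strategy, and your sketch is sound: the Milnor square passes to twisted polynomial rings, the regular overring and the finite quotients have vanishing twisted $NK_0$, and the exponent of $NK_1$ of a finite ring is controlled via the nilpotent radical (your $(1+\nu)^{m^s}=1$ argument works because $J_\alpha[t]$ is genuinely nilpotent and one is raising a single element to a power, so the binomial expansion applies even non-commutatively). The one genuine technical obstacle you correctly identify is the existence of an $\alpha$-invariant hereditary order; this is exactly where the work lies in \cite{cp}, and your Bruhat--Tits fixed-point suggestion (locally) or your fallback to the prime-by-prime local argument are both viable routes. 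So nothing is wrong, but you are reinventing results already in the literature; the paper's proof gains brevity by invoking them directly, while your version has the advantage of being self-contained.
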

\begin{proof} F. Connolly and S. Prassidis in \cite{cp} proved that $NK_0(\mathbb Z G, \alpha)$ has finite exponent when $G$ is finite. A. KuKu and G. Tang \cite[Theorem 2.2]{kt} showed that $NK_i(\mathbb Z G,\alpha)$
is $|G|$-primary torsion for all $i\ge 0$. These facts together with \textbf{Theorem C} above complete the proof.
\end{proof}

\begin{rem} It is a natural question whether the above Corollary holds in dimensions other than zero.  In negative 
dimensions $i<0$, Farrell and Jones showed in \cite{fj2} that $NK_i(\mathbb Z G,\alpha)$ always vanishes when $G$
is finite. In positive dimensions $i>0$, there are partial results. As mentioned in the proof above,  Kuku and Tang  
\cite[Theorem 2.2]{kt} showed that $NK_i(\mathbb Z G,\alpha)$
is $|G|$-primary torsion.  Grunewald  \cite[Theorem 5.9]{gr-topology} then generalized their result to 
\textit{polycyclic-by-finite} groups in all dimensions. He showed that, for all $i\in\mathbb Z$,  
$NK_i(\mathbb Z G,\alpha)$ is $mn$-primary
torsion for every polycyclic-by-finite group $G$ and every group automorphism $\alpha: G\rightarrow G$  of finite order, 
where $n=|\alpha|$ and $m$ is the index of some poly-infinite cyclic subgroup of $G$ (such a subgroup always exists). 
However, although we have these nice results on the possible orders of torsion elements, it seems there are
no known results on the exponent of these Nil-groups. This is clearly a topic for future research. 
\end{rem}

\begin{rem}
As an example in dimension greater than zero, Weibel \cite{we} showed that $NK_1(\mathbb ZD_4) \neq 0$, where $D_4$ denotes the 
dihedral group of
order $8$. He also constructs a surjection $\bigoplus_{\infty} (\mathbb Z_2\oplus \mathbb Z_4) 
\rightarrow NK_1(\mathbb ZD_4)$, showing that this group has exponent $2$ or $4$. It follows from our Corollary
that the group $NK_1(\mathbb ZD_4)$ is isomorphic to one of the three groups $\bigoplus_{\infty} (\mathbb Z_2\oplus \mathbb Z_4)$, $\bigoplus_{\infty} \mathbb Z_4$, or $\bigoplus_{\infty} \mathbb Z_2$.
\end{rem}

For our next application, we recall that there is, for any group $G$, an assembly
map $H_n^G(\underbar EG; \textbf K_{\mathbb Z}) \rightarrow K_n(\mathbb Z[G])$, where $H_*^?(-; \textbf K_{\mathbb Z})$ denotes the specific 
equivariant generalized homology theory appearing in the $K$-theoretic Farrell-Jones isomorphism conjecture with coefficient in $\mathbb Z$,
and $\underbar EG$ is a model for the classifying space for proper $G$-actions. We refer the reader to 
Section \ref{Nils-VC} for a discussion of these notions, as well as for the proof of:

\begin{thmD}\label{main} For any  virtually cyclic group $V$, there exists a finite abelian
group $H$ with the property that there is an isomorphism:
$$\bigoplus _{\infty} H \cong \text{CoKer}\Big( H_0^V(\underbar EV; \textbf K_{\mathbb Z}) \rightarrow K_0(\mathbb Z[V])\Big)$$
The same result holds in dimension $n$ whenever $\text{CoKer}\Big( H_n^V(\underbar EV; \textbf K_{\mathbb Z}) \rightarrow K_n(\mathbb Z[V])\Big)$ has finite exponent.
\end{thmD}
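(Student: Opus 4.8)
The plan is to prove \textbf{Theorem D} by reducing the cokernel to the Farrell twisted Nil-groups studied in the previous sections and then invoking \textbf{Theorem C} (and, in dimension $0$, Corollary \ref{nk0}). I would begin with the standard trichotomy for a virtually cyclic group $V$: either $V$ is finite; or $V$ surjects onto $\mathbb Z$ with finite kernel, in which case $V\cong F\rtimes_\alpha\mathbb Z$ for some finite group $F$ and some $\alpha\in Aut(F)$; or $V$ surjects onto the infinite dihedral group with finite kernel, in which case $V\cong\amal{G_0}{F}{G_1}$ with $F$ finite and $[G_0:F]=[G_1:F]=2$. If $V$ is finite, then $\underbar EV$ can be taken to be a point, the assembly map is an isomorphism, and the cokernel is trivial; here one takes $H=0$. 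So the content lies in the two infinite cases.

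Next I would identify the cokernel in each infinite case. When $V\cong F\rtimes_\alpha\mathbb Z$, the image of the assembly map $H_n^V(\underbar EV;\textbf K_{\mathbb Z})\to K_n(\mathbb Z V)$ is exactly the ``non-Nil'' summand of $K_n(\mathbb Z[F\rtimes_\alpha\mathbb Z])$ appearing in the twisted Bass--Heller--Swan decomposition, so the cokernel is isomorphic to $NK_n(\mathbb Z F,\alpha)\oplus NK_n(\mathbb Z F,\alpha^{-1})$; both $\alpha$ and $\alpha^{-1}$ have finite order since $F$ is finite. When $V\cong\amal{G_0}{F}{G_1}$, I would combine Waldhausen's decomposition of the K-theory of an amalgamated product with the theorem of Davis, Khan and Ranicki identifying the relevant Waldhausen Nil-group over $\mathbb Z$ with a Farrell twisted Nil-group, concluding that the cokernel is isomorphic to $NK_n(\mathbb Z F,\beta)$ for a suitable $\beta\in Aut(F)$ (automatically of finite order). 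In every case the cokernel is, up to isomorphism, a finite direct sum of Farrell twisted Nil-groups $NK_n(\mathbb Z F_j,\gamma_j)$ in which each $\mathbb Z F_j$ is a countable ring, each $\gamma_j\in Aut(F_j)$ has finite order, and each summand is also a direct summand of the cokernel.

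To conclude in dimension $n$: if the cokernel has finite exponent, then each $NK_n(\mathbb Z F_j,\gamma_j)$ has finite exponent, so \textbf{Theorem C} gives a finite abelian group $H_j$ with $NK_n(\mathbb Z F_j,\gamma_j)\cong\bigoplus_{\infty}H_j$; regrouping, $\bigoplus_j\big(\bigoplus_{\infty}H_j\big)\cong\bigoplus_{\infty}\big(\bigoplus_j H_j\big)$, so one takes $H:=\bigoplus_j H_j$, a finite abelian group. In dimension $0$ the finite-exponent hypothesis is automatic: by Connolly--Prassidis \cite{cp} each $NK_0(\mathbb Z F_j,\gamma_j)$ has finite exponent (indeed it is $|F_j|$-primary torsion by Kuku--Tang \cite{kt}), so Corollary \ref{nk0} applies directly to each summand. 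This yields the dimension-$0$ statement unconditionally and the dimension-$n$ statement under the finite-exponent hypothesis.

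The main obstacle is the identification step: recognizing the cokernel of the assembly map as a finite sum of Farrell twisted Nil-groups of integral group rings of \emph{finite} groups, with twisting automorphisms of \emph{finite} order. For type I virtually cyclic groups this is the twisted Bass--Heller--Swan decomposition together with the fact that the $\underbar EV$-part of the assembly map is precisely the non-Nil summand; for type II one additionally needs Waldhausen's Nil-term and the Davis--Khan--Ranicki comparison, where one must keep track that the coefficient ring stays a finite group ring and the twisting stays of finite order, so that \textbf{Theorem C} and Corollary \ref{nk0} remain applicable. Once this reduction is in hand, the rest of the argument is formal.
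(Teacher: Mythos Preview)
Your proposal is correct and follows essentially the same approach as the paper: reduce via the virtually cyclic trichotomy, identify the cokernel in the two infinite cases as (a sum of) Farrell twisted Nil-groups of integral group rings of finite groups with finite-order twist (citing \cite{DQR}, \cite{DKR}), and then apply \textbf{Theorem C} together with Corollary \ref{nk0} in dimension $0$. The only cosmetic difference is that the paper, in the amalgamated case, explicitly passes to the canonical index-two subgroup $W=H\rtimes_\alpha\mathbb Z\leq V$ to name the automorphism, whereas you leave it as ``a suitable $\beta$''; the content is the same.
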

We conclude the paper with some general remarks and open questions in Section \ref{concluding}.


\section{Some exact functors}\label{nilcat}

In this section, we define various functors that will be used in our proofs.
Let $R$ be an associative ring with unit and $\alpha: R\rightarrow R$ be a ring automorphism of finite order, 
say $|\alpha| = n$.  For each integer $i\in\mathbb Z$, denote by $R_{\alpha^i}$ the $R$-bimodule which coincides 
with $R$ as an abelian group, but with bimodule structure given by 
$r\cdot x:=rx$ and $x\cdot r:=x\alpha^i(r)$ (where $x\in R_{\alpha^i}$ and $r\in R$).  Note that as left (or as right) 
$R$-modules, $R_{\alpha^i}$ and $R$ are 
isomorphic, but they are in general {\it not} isomorphic  as $R$-bimodules. For each right $R$-module $M$ and integer 
$i$, define a new right $R$-module $M_{\alpha^i}$ as follows: as abelian groups, $M_{\alpha^i}$ is the same as $M$, 
however the right $R$-module structure on $M_{\alpha^i}$ is given by $x\cdot r:=x\alpha^i(r)$. Clearly 
$M_{\alpha^{n}}=M$ and $(M_{\alpha^i})_{\alpha^j}=M_{\alpha^{i+j}}$ as right $R$-modules. We could have defined 
$M_{\alpha^i}=M\otimes_RR_{\alpha^i}$, however this has the slight disadvantage that the above equalities 
would not hold -- we would only have natural isomorphisms between the corresponding functors.

\vskip 10pt

Let $\textbf{P}(R)$ denote the category of finitely generated right projective $R$-modules. For each $i\in\mathbb Z$, 
there is an exact functor $S_i: \textbf{P}(R)\rightarrow\textbf{P}(R)$ given by $S_i(P):=P_{\alpha^i}$ on objects and 
$S_i(\phi)=\phi$ on morphisms. Note that if we forget about the right $R$-module structures, and just view these
as abelian groups and group homomorphisms, then each $S_i$ is just the 
identity functor.  Clearly $S_i\circ S_j=S_j\circ S_i=S_{i+j}$ and $S_n=Id$, so the map $i\mapsto S_i$ defines a 
functorial $\mathbb Z$-action on the category $\textbf{P}(R)$, which factors through a functorial $\mathbb Z_n$-action
(recall that $n$ is the order of the ring automorphism $\alpha$). 

\vskip 10pt

We are interested in the {\it Nil-category} $NIL(R; \alpha)$. Recall that objects of this category are of the form $(P, f)$, 
where $P$ is an object in $\textbf{P}(R)$ and $f: P\rightarrow P_{\alpha}=S_1(P)$ is a right $R$-module homomorphism
which is {\it nilpotent}, in the sense that a high enough composite map of the following form is the zero map:
$$
\xymatrix{
P \ar[rrrrr]^-{S_{k-1}(f)\circ S_{k-2}(f)\circ\cdots\circ S_1(f)\circ f} & & & & & P_{\alpha^{k}} \\
}$$
A morphism $\phi: (P,f)\rightarrow(Q,g)$ in $NIL(R;\alpha)$ is given by a morphism 
$\phi: P\rightarrow Q$ in $\textbf{P}(R)$ which makes the obvious diagram commutative, 
i.e. $S_1(\phi)\circ f = g\circ \phi$.   We have two exact functors 
$$F: NIL(R; \alpha)\rightarrow\textbf{P}(R),\ \  F(P,f)=P$$
$$G: \textbf{P}(R)\rightarrow NIL(R; \alpha),\ \ G(P)=(P, 0)$$
which give rise to a splitting of the $K$-theory groups $K_i(NIL(R; \alpha))=K_i(R)\oplus Nil_i(R;\alpha)$, 
where $Nil_i(R;\alpha):=\text{Ker}\big(K_i(NIL(R;\alpha))\rightarrow K_i(R)\big), i\in\mathbb N$.

\begin{rem}
The Farrell Nil-groups $NK_*(R, \alpha)$ mentioned in the introduction coincide, with a dimension shift, with the
groups $Nil_*(R;\alpha^{-1})$ defined above. More precisely, one has for every $i\geq 1$ an isomorphism 
$NK_i(R, \alpha) \cong Nil_{i-1}(R;\alpha^{-1})$ (\cite[Theorem 2.1]{Gr2}). 
\end{rem}

We now introduce two exact functors on the category $NIL(R;\alpha)$ which will play an important role in our proofs. 
On the level of $K$-theory, one of these yields the twisted analogue of the Verscheibung operators, while the other 
gives the classical Frobenius operators.

\begin{defn}[Twisted Verscheibung functors] \label{Verscheibung}
For each positive integer $m$, define the twisted Verscheibung functors 
$V_m : NIL(R;\alpha)\rightarrow NIL(R;\alpha)$ as follows.
On objects, we set
$$V_m\big((P, f)\big)=(P\oplus P_{\alpha^{-1}}\oplus P_{\alpha^{-2}}\oplus\cdots\oplus P_{\alpha^{-mn}}, \overline{f})
=\Big(\displaystyle\sum_{i=0}^{mn} P_{\alpha^{-i}}, \overline{f}\Big)=\Big(\displaystyle\sum_{i=0}^{mn}S_{-i}(P), 
\overline{f}\Big)$$
where the morphism 
$$\overline{f}: \displaystyle\sum_{i=0}^{mn} P_{\alpha^{-i}}\longrightarrow \Big(\displaystyle\sum_{j=0}^{mn} 
P_{\alpha^{-j}}\Big)_{\alpha}=\displaystyle\sum_{j=0}^{mn} P_{\alpha^{-j+1}} $$
is defined component-wise by the maps $f_{ij}: P_{\alpha^{-i}}\rightarrow P_{\alpha^{-j+1}}$ given via the formula
$$f_{ij}=
\begin{cases}
id & \text{if}\ j=i+1, 0\le i\le mn-1\\
f & \text{if}\ i=mn, j=0\\
0 & \text{otherwise}
\end{cases}
$$
In the proof of Lemma \ref{relation} below, we will see that $\overline{f}$ is nilpotent, so that $V_m\big((P, f)\big)$ 
does indeed define an object
in the category $NIL(R;\alpha)$.  If $\phi:(P,f)\rightarrow (Q, g)$ is a morphism in the category $NIL(R;\alpha)$, we define the morphism 
$$V_m(\phi):\Big(\displaystyle\sum_{i=0}^{mn} P_{\alpha^{-i}}, \overline{f}\Big)\rightarrow \Big(\displaystyle\sum_{i=0}^{mn} Q_{\alpha^{-i}}, \overline{g}\Big)$$ 
via the formula $V_m(\phi)=\displaystyle\sum_{i=0}^{mn}S_{-i}(\phi)$. One checks that 
(i) $\overline{g}\circ V_m(\phi)=S_1(V_m(\phi))\circ\overline{f}$, (ii) $V_m(id)=id$ and (iii) $V_m(\phi\circ\psi)=V_m(\phi)\circ V_m(\psi)$, so that $V_m$ is indeed a functor.  Moreover, $V_m$ is exact because each $S_{-i}$ is exact. 
\end{defn}

\begin{defn}[Frobenius functors]
For each positive integer $m$, define the Frobenius functors $F_m : NIL(R;\alpha)\rightarrow NIL(R;\alpha)$ as follows. 
On objects, we set $F_m\big((P,f)\big)=(P, \tilde{f})$ where $\tilde f$ is the morphism defined by the composition
$$
\xymatrix{
P \ar[rrrrrr]^-{S_{mn}(f)\circ S_{mn-1}(f)\circ\cdots\circ S_1(f)\circ f} & & & & & & P_{\alpha^{mn+1}}=P_{\alpha} \\
}$$
(recall that the ring automorphism $\alpha$ has order $|\alpha|=n$). It is immediate that the map $\tilde f$ is 
nilpotent, so that $F_m\big((P,f)\big)$ is indeed an object in $NIL(R;\alpha)$.
Now if $\phi: (P,f)\rightarrow (Q, g)$ is a morphism in the category $NIL(R;\alpha)$, we define the morphism 
$F_m(\phi): (P, \tilde f)\rightarrow (Q, \tilde g)$ to coincide with the morphism $\phi$.
It is obvious that $F_m(id)=id$ and $F_m(\phi\circ\psi)=F_m(\phi)\circ F_m(\psi)$, and one can easily check that
$\tilde g\circ\phi=S_1(\phi)\circ\tilde f$, so that $F_m$ is a genuine functor. Clearly $F_m$ is exact. 
\end{defn}

\begin{defn}[$\alpha$-twisting functors] 
For each $i\in\mathbb Z$, we define the exact functor $T_i: NIL(R;\alpha)\rightarrow NIL(R;\alpha)$ as follows. On objects, we set $T_i\big((P,f)\big)=\big(S_{-i}(P), S_{-i}(f)\big)$, and if
$\phi:(P,f)\rightarrow(Q,g)$ is a morphism, we set $T_i(\phi)$ to be the morphism $S_{-i}(\phi): S_{-i}(P) \rightarrow 
S_{-i}(Q)$. Observe that, as with the functors $S_i$ on the category $\textbf{P}(R)$, the functors $T_i$ define a 
functorial $\mathbb Z$-action on the category $NIL(R;\alpha)$, which factors through a functorial $\mathbb Z_n$-action.
\end{defn}

The relationship between these various functors is described in the following Lemma. We will write $G_m$ for the 
composite exact functor $G_m=F_m\circ V_m$.

\begin{lem}\label{relation}
We have the equality $G_m=\displaystyle\sum_{i=0}^{mn}T_i$.
\end{lem}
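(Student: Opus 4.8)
The plan is to prove the equality directly, by evaluating both functors on an arbitrary object $(P,f)$ and an arbitrary morphism $\phi$ of $NIL(R;\alpha)$ and checking that they agree on the nose; the substance is a bookkeeping computation showing that the Frobenius of a Verscheibung ``unwinds'' to the diagonal of the twisting functors $T_i$.

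First I would unwind the left-hand side: by definition $V_m(P,f)=(Q,\overline f)$ with $Q=\sum_{i=0}^{mn}P_{\alpha^{-i}}$, so $G_m(P,f)=F_m(Q,\overline f)=(Q,\widetilde{\overline f})$, where $\widetilde{\overline f}=S_{mn}(\overline f)\circ\cdots\circ S_1(\overline f)\circ\overline f\colon Q\to Q_{\alpha^{mn+1}}=Q_\alpha$ is the $(mn+1)$-fold composite. On the right-hand side, a finite direct sum in $NIL(R;\alpha)$ is formed summand-wise in both coordinates, so $\sum_{i=0}^{mn}T_i(P,f)=(Q,g)$, where the underlying module is again $Q=\bigoplus_{i=0}^{mn}S_{-i}(P)$ and $g$ is the ``diagonal'' nilpotent map whose restriction to the summand $S_{-i}(P)=P_{\alpha^{-i}}$ is $S_{-i}(f)$, landing in the corresponding summand of $Q_\alpha$. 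Thus the underlying modules of the two sides already coincide, and everything reduces to identifying $\widetilde{\overline f}$ with $g$.

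The heart of the argument, and what I expect to be the main obstacle, is the combinatorial unwinding of the composite $\widetilde{\overline f}$ together with keeping the $\alpha$-twists straight. The point is that each $S_j$ acts as the identity on underlying abelian groups and on morphisms, so $S_j(\overline f)$ equals $\overline f$ as an endomorphism of the underlying group $\bigoplus_{i=0}^{mn}P$; reading off the matrix $(f_{ij})$, this endomorphism cyclically shifts the summand index by $1$, inserting a factor of $f$ only at the wrap-around from the $mn$-th summand to the $0$-th. Hence the $(mn+1)$-fold composite returns the $i$-th summand to itself, crosses the wrap-around transition exactly once, and is the identity on all other transitions, so its restriction to the $i$-th summand is $f$ as a map of abelian groups. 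Since an $R$-module map is determined by its underlying homomorphism of abelian groups, and $S_j\circ S_{-i}=S_{j-i}$ lets one read off the correct twist on source and target, this restriction is exactly $S_{-i}(f)$. Therefore $\widetilde{\overline f}=g$ on objects; as a byproduct this also proves the nilpotency of $\overline f$ promised in Definition \ref{Verscheibung}, since $g=\bigoplus_i S_{-i}(f)$ is manifestly nilpotent (the length-$k$ nilpotency composite of $S_{-i}(f)$ is $S_{-i}$ applied to that of $f$, which vanishes for large $k$), and nilpotency of $\widetilde{\overline f}$ forces nilpotency of $\overline f$.

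It then remains to check agreement on morphisms, which is routine: $G_m(\phi)=F_m(V_m(\phi))=V_m(\phi)=\bigoplus_{i=0}^{mn}S_{-i}(\phi)=\bigoplus_{i=0}^{mn}T_i(\phi)$, using that $F_m$ is the identity on morphisms and that $T_i(\phi)=S_{-i}(\phi)$. Combining the object and morphism computations yields $G_m=\sum_{i=0}^{mn}T_i$ as functors.
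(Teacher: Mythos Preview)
Your proposal is correct and follows essentially the same approach as the paper: both compute $G_m$ on an object by observing that each $S_j$ is the identity on underlying abelian groups, so $\widetilde{\overline f}=\overline f^{\,mn+1}$, and then use the cyclic-shift structure of $\overline f$ to identify this $(mn+1)$-fold composite with the diagonal map $\bigoplus_i S_{-i}(f)$; both also note the nilpotency of $\overline f$ as a byproduct and check morphisms separately. Your write-up is somewhat more explicit about the wrap-around count and the morphism check, but there is no substantive difference in method.
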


\begin{proof}  Let $(P, f)$ be an object in $NIL(R;\alpha)$. Then we have 
$G_m\big((P, f)\big)=\Big(\displaystyle\sum_{i=0}^{mn}S_{-i}(P), \tilde{\overline{f}}\Big)$, where 
$\tilde{\overline{f}}=S_{mn}(\overline{f})\circ S_{mn-1}(\overline{f})\circ\cdots\circ S_1(\overline{f})\circ\overline{f}$.  
Note that if we forget the right $R$-module structures, each $S_i$ is the identity functor on abelian groups.  So as a 
morphism of abelian groups, $\tilde{\overline{f}}=\overline{f}^{mn+1}$. Now recall that $\overline{f}$ is a morphism
which cyclicly permutes the $mn+1$ direct summands occuring in its source and target. Using this observation, it is 
then easy to see that $\tilde{\overline{f}}=\overline{f}^{mn+1}$ is diagonal and equal to 
$\displaystyle\sum_{i=0}^{mn}S_{-i}(f)$. So on the level of objects, $G_m$ and $\displaystyle\sum_{i=0}^{mn}T_i$ 
agree. From this, we also see that $\overline f$ is nilpotent (as was indicated in Definition \ref{Verscheibung}). 
It is obvious that they agree on morphisms. 
\end{proof}

\begin{rem} It is natural to consider the more general case when $\alpha: R\rightarrow R$ has finite order in the 
{\it outer automorphism group} of the ring $R$, i.e. there exists $n\in\mathbb N$ and 
a unit $u\in R$ so that $\alpha^n(r)=uru^{-1}, \forall r\in R$. In this situation, we have for any right $R$-module $M$ 
and integer $m$, an isomorphism $\tau_{m, M}: M_{\alpha^{mn}}\rightarrow M$, $\tau_{m, M}(r):=ru^m$ of right 
$R$-modules. This gives rise to a natural isomorphism between the functors $S_{mn}$ and $S_0=Id$.  It is then easy
to similarly define twisted Verscheibung functors and Frobenius functors, and to verify
an analogue of Lemma \ref{relation}.  However, in this case, we generally do {\bf not} have that $T_n$ is naturally 
isomorphic to $T_0$, unless $\alpha$ fixes $u$. This key issue prevents our proof of Farrell's Lemma 
\ref{farrell-lemma}(2) below (which is the key to the proof of our main theorems) to go through in this more 
general setting.
\end{rem}


\section{Non-finiteness of Farrell Nils}\label{non-finiteness}

This section is devoted to proving {\bf Theorem A}.


\subsection{A version of Farrell's Lemma.}
We are now ready to establish our analogue of Farrell's key lemmas from his paper \cite{fa}.

\begin{lem}\label{farrell-lemma} The following results hold:

\begin{enumerate}

\item $\forall j\in\mathbb N$, the induced morphisms $K_j(V_m), K_j(F_m): K_j(NIL(R;\alpha))\rightarrow K_j(NIL(R;\alpha))$ on $K$-theory map the 
summand $Nil_j(R;\alpha)$ to itself;

\item $\forall j, m\in\mathbb N$, the identity $(2+mn)K_j(G_m)-K_j(G_m)^2=\mu_{1+mn}$ holds, where the map 
$\mu_{1+mn}$ is multiplication by $1+mn$;

\item $\forall j\in\mathbb N$ and each $x\in Nil_j(R;\alpha)$, there exists a positive integer $r(x)$, such that $K_j(F_m)(x)=0$ for all 
$m\ge r(x)$.

\end{enumerate}
\end{lem}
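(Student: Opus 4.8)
The plan is to dispatch parts (1) and (2) formally, using the functor identities already in hand -- especially Lemma~\ref{relation} -- and then to concentrate on part (3), which is the real content and the reason we work with the category $NIL(R;\alpha)$ rather than directly with $K$-theory groups. For part (1), I would first record the functor-level equalities
\[
F\circ V_m \;=\; \Big(\bigoplus_{i=0}^{mn} S_{-i}\Big)\circ F
\qquad\text{and}\qquad
F\circ F_m \;=\; F
\]
as functors $NIL(R;\alpha)\to\textbf{P}(R)$; both are immediate from the definitions, since on an object one simply reads off the underlying projective module and on morphisms every functor in sight acts as the identity. The splitting $K_j(NIL(R;\alpha))=K_j(R)\oplus Nil_j(R;\alpha)$ identifies $Nil_j(R;\alpha)$ with $\ker K_j(F)$, so applying the functor $K_j$ to the two displayed identities and evaluating at a class $x\in Nil_j(R;\alpha)$ shows that $K_j(F)\big(K_j(V_m)(x)\big)$ and $K_j(F)\big(K_j(F_m)(x)\big)$ are each obtained by applying a homomorphism to $K_j(F)(x)=0$; hence $K_j(V_m)(x),K_j(F_m)(x)\in\ker K_j(F)=Nil_j(R;\alpha)$.

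For part (2), Lemma~\ref{relation} gives $G_m=\bigoplus_{i=0}^{mn}T_i$ as exact functors, so the additivity theorem for $K$-theory of exact categories yields $K_j(G_m)=\sum_{i=0}^{mn}K_j(T_i)$. Writing $\tau:=K_j(T_1)$ and using $T_i=T_1^{\circ i}$ together with $T_n=Id$, we get $K_j(T_i)=\tau^i$ and $\tau^n=\mathrm{id}$, so $K_j(G_m)=\sum_{i=0}^{mn}\tau^i$. Setting $N:=\sum_{i=0}^{n-1}\tau^i$, the relations $\tau N=N$ and $N^2=nN$ follow at once from $\tau^n=\mathrm{id}$, and grouping the $mn+1$ summands into $m$ blocks of $n$ consecutive powers of $\tau$ (plus the leftover term $\tau^{mn}=\mathrm{id}$) gives $K_j(G_m)=mN+\mathrm{id}$. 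The asserted identity then reduces to the one-line computation $(2+mn)(mN+1)-(mN+1)^2=(mn+2)-1=mn+1$, which uses only $N^2=nN$ (the $N$-terms cancel); this is exactly $(2+mn)K_j(G_m)-K_j(G_m)^2=\mu_{1+mn}$.

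Part (3) is the step I expect to be the main obstacle, and it is why the categorical setup is essential. The decisive observation is at the level of objects: if $(P,f)$ has nilpotence degree $d$ -- meaning the $d$-fold twisted composite $S_{d-1}(f)\circ\cdots\circ f$ vanishes -- then $F_m\big((P,f)\big)=(P,\tilde f)$ has $\tilde f=0$ as soon as $mn\geq d$, so $F_m\big((P,f)\big)=(P,0)=G\big(F(P,f)\big)$; since $F_m$ and $G\circ F$ both act as the identity on morphisms, the two functors literally coincide on the full subcategory of objects of nilpotence degree $\leq N$ whenever $mn\geq N$. Given $x\in Nil_j(R;\alpha)$, I would then argue by compactness in the chosen model for $K(NIL(R;\alpha))$: the class $x$ is supported on a finite subcomplex, which involves only finitely many objects of $NIL(R;\alpha)$, each of finite nilpotence degree, so there is a bound $N(x)$ on all the degrees that occur. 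Enlarging this subcomplex by its image under $G\circ F$ -- which is idempotent (as $F\circ G=Id$), introduces only objects of nilpotence degree $\leq 1$, and produces a finite subcomplex carried into itself by $G\circ F$ -- one arranges that for every $m$ with $mn\geq N(x)$ the self-map induced by $F_m$ coincides with the one induced by $G\circ F$ on this subcomplex. Consequently $K_j(F_m)(x)=K_j(G)\big(K_j(F)(x)\big)=0$ for all such $m$, and $r(x)=\lceil N(x)/n\rceil$ works. The delicate point -- rigorously locating a higher $K$-theory class on a bounded-nilpotence-degree piece of the category in a manner compatible with the functors $F_m$ -- is exactly what is invisible at the level of $K$-theory groups and transfer maps alone, and it is this point that is carried out carefully in \cite{gr-topology}.
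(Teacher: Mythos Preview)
Your argument is correct and follows the same route as the paper. Part (1) is verbatim the paper's proof (the identities $F\circ V_m=(\bigoplus S_{-i})\circ F$ and $F\circ F_m=F$), and part (2) is the same additivity-plus-periodicity computation, just packaged more cleanly via $N=\sum_{i=0}^{n-1}\tau^i$ and $N^2=nN$; the paper instead writes $K_j(G_m)=\mathrm{id}+m\sum_{i=1}^{n}K_j(T_i)$ and expands the square directly, arriving at the same cancellation. For part (3) the paper simply cites Grunewald \cite[Proposition~4.6]{gr-topology}, whereas you supply the heuristic (that $F_m=G\circ F$ on objects of nilpotence degree $\leq mn$, plus a compactness argument to bound the degrees appearing in a representative of $x$) before deferring to the same reference; your sketch is accurate and identifies the correct delicate point.
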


\begin{proof} (1) Let $H_m:=\displaystyle\sum_{i=0}^{mn}S_{-i}: \textbf{P}(R)\rightarrow\textbf{P}(R)$, one then easily checks $F\circ V_m=H_m\circ F$. We also have $F\circ F_m=F$. Statement (1) follows easily from these.

\vskip 5pt

\noindent (2) By the Additivity Theorem for algebraic K-theory, Lemma \ref{relation} immediately gives us that
$$K_j(G_m)=\displaystyle\sum_{i=0}^{mn}K_j(T_i)=id+m\displaystyle\sum_{i=1}^{n}K_j(T_i)$$
(recall that the functors $T_i$ are $n$-periodic). Now let us evaluate the square of the map $K_j(G_m)$:
\begin{align*}
K_j(G_m)^2 &=\Big(id+m\displaystyle\sum_{i=1}^{n}K_j(T_i)\Big)\Big(id+m\displaystyle\sum_{l=1}^{n}K_j(T_l)\Big)\\
&=id+2m\displaystyle\sum_{i=1}^{n}K_j(T_i)+m^2\displaystyle\sum_{i=1}^{n}\displaystyle\sum_{l=1}^nK_j(T_{i+l})\\
&=id+2m\displaystyle\sum_{i=1}^{n}K_j(T_i)+m^2\displaystyle\sum_{i=1}^{n}\displaystyle\sum_{l=1}^nK_j(T_{l})\\
&=id+2m\displaystyle\sum_{i=1}^{n}K_j(T_i)+m^2n\displaystyle\sum_{l=1}^nK_j(T_{l})\\
&=id+(2m+m^2n)\displaystyle\sum_{i=1}^{n}K_j(T_i)
\end{align*}
In the third equality above, we used the fact that the $T_i$ functors are $n$-periodic, so that shifting the index on
the inner sum by $i$ leaves the sum unchanged. Finally, substituting in the expression we have for $K_j(G_m)$ 
and the expression we derived for $K_j(G_m)^2$, we see that:
\begin{align*}
&(2+mn)K_j(G_m)-K_j(G_m)^2 \\
&= (2+mn)\Big(id+m\displaystyle\sum_{i=1}^{n}K_j(T_i)\Big) - \Big(id+(2m+m^2n)\displaystyle\sum_{i=1}^{n}K_j(T_i) \Big)\\
&= (2+mn)id-id =\mu_{(1+mn)}
\end{align*}
completing the proof of statement (2).

\vskip 5pt

\noindent(3) This result is due to Grunewald \cite[Proposition 4.6]{gr-topology}.
\end{proof}


\subsection{Proof of Theorem A} The proof of {\bf Theorem A} now follows easily. Let us focus on the
case where $i\geq 1$, as the case $i\leq 1$ has already been established by Grunewald \cite{gr-agt}
and Ramos \cite{ra}. 
So let us assume that the Farrell Nil-group $NK_i(R, \alpha) \cong Nil_{i-1}(R;\alpha^{-1})$ is non-trivial
and finitely generated, where $i\geq 1$. Then one can find arbitrarily large positive integers $m$ with the property 
that the map $\mu_{(1+mn)}$ is an injective map from $Nil_{i-1}(R;\alpha^{-1})$ to itself (for example, one can
take $m$ to be any multiple of the order of the torsion subgroup of $Nil_{i-1}(R;\alpha^{-1})$). From Lemma 
\ref{farrell-lemma}(2), we can factor the map $\mu_{(1+mn)}$ as a composite 
$$\mu_{(1+mn)} = \big(\mu_{(2+mn)}-K_j(G_m)\big)\circ K_j(G_m)$$
and hence there is an arbitrarily large sequence of integers $m$ with the property that the corresponding
maps $K_j(G_m) = K_j(F_m) \circ K_j(V_m)$ are injective. This implies that there are infinitely many integers
$m$ for which the map $K_j(F_m)$ is non-zero. 

On the other hand, let $x_1, \ldots, x_k$ be a finite set of generators for the abelian group $Nil_{i-1}(R;\alpha^{-1})$.
Then from Lemma \ref{farrell-lemma}(3), we have that for any $m\geq \max \{r(x_i)\}$, the map $K_j(F_m)$ is 
identically zero, a contradiction. This completes the proof of {\bf Theorem A}.


\section{Finite subgroups of Farrell Nil-groups}\label{fin-sgps}


\subsection{A Lemma on splittings.}

In order to establish {\bf Theorem B}, we will need an algebraic lemma for recognizing when two direct summands
inside an ambient group jointly form a direct summand.

\begin{lem}\label{splitting-lemma}
Let $G$ be an abelian group and $H<G$, $K<G$ be a pair of subgroups. Suppose that $H\cap K=\{0\}$, 
and that there are two retractions $\lambda: G\rightarrow H$ and $\rho: G\rightarrow K$ with the property that 
$\lambda(K)=\{0\}$. Then there exists a subgroup $L<G$, which is isomorphic to $H$, and such that $L\oplus K$ is 
also a direct summand of $G$.
\end{lem}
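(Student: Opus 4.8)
The plan is to construct the desired subgroup $L$ by ``correcting'' $H$ using the retraction $\rho$ onto $K$, so that the corrected copy of $H$ is annihilated by $\rho$ and still maps isomorphically under $\lambda$. Concretely, I would first observe that since $\lambda: G \to H$ is a retraction (i.e. $\lambda|_H = \mathrm{id}_H$) with $\lambda(K) = \{0\}$, and $\rho: G \to K$ is a retraction (so $\rho|_K = \mathrm{id}_K$), the composite $\rho|_H: H \to K$ need not be zero, which is precisely the obstruction to $H \oplus K$ being a summand directly. So I define $L := \{\, h - \rho(h) : h \in H \,\} \subseteq G$. This is a subgroup since $h \mapsto h - \rho(h)$ is a group homomorphism $\psi: H \to G$, and $L = \psi(H)$.

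Next I would verify the required properties in order. (a) $\psi$ is injective, hence $L \cong H$: if $h - \rho(h) = 0$ then $h = \rho(h) \in K$, so $h \in H \cap K = \{0\}$. (b) $\lambda|_L$ is still a retraction-like iso onto $H$: for $h \in H$, $\lambda(h - \rho(h)) = \lambda(h) - \lambda(\rho(h)) = h - 0 = h$ because $\rho(h) \in K$ and $\lambda(K) = \{0\}$; thus $\lambda \circ \psi = \mathrm{id}_H$, so $\lambda|_L : L \to H$ is an isomorphism. (c) $\rho(L) = \{0\}$: for $h \in H$, $\rho(h - \rho(h)) = \rho(h) - \rho(\rho(h)) = \rho(h) - \rho(h) = 0$ since $\rho(h) \in K$ and $\rho|_K = \mathrm{id}$. (d) $L \cap K = \{0\}$: if $h - \rho(h) \in K$, then applying $\lambda$ gives $h = \lambda(h - \rho(h)) = 0$ (using (b) backwards, or directly since $\lambda(K) = 0$ and $\lambda(h-\rho(h)) = h$), hence the element is $h - \rho(h) = -\rho(0)=0$.

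Finally I would exhibit the retraction $G \to L \oplus K$ witnessing that $L \oplus K$ is a direct summand. The natural candidate is $\pi := (\psi \circ \lambda) + (\text{correction via } \rho)$; more precisely, define $\pi: G \to G$ by $\pi(g) = \psi(\lambda(g)) + \rho(g) = \lambda(g) - \rho(\lambda(g)) + \rho(g)$. One checks that $\pi$ is a homomorphism with image in $L + K$, and that $\pi$ restricted to $L \oplus K$ is the identity: for $h - \rho(h) \in L$ we get $\pi(h - \rho(h)) = \psi(\lambda(h-\rho(h))) + \rho(h - \rho(h)) = \psi(h) + 0 = h - \rho(h)$ by (b) and (c); and for $k \in K$ we get $\pi(k) = \psi(\lambda(k)) + \rho(k) = \psi(0) + k = k$ since $\lambda(K) = 0$ and $\rho|_K = \mathrm{id}$. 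Since $L \cap K = \{0\}$ by (d), the sum $L + K$ is direct, and $\pi$ is a retraction of $G$ onto $L \oplus K$, so $L \oplus K$ is a direct summand of $G$, as desired.

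The only mildly delicate point is keeping straight which maps are genuine retractions versus merely homomorphisms, and checking that the candidate projection $\pi$ is well-defined and idempotent on the target; these are all routine once the formula $\psi(h) = h - \rho(h)$ is written down, so I do not anticipate a real obstacle. (Note everything here is purely in the category of abelian groups, using no finiteness of $H$, so the lemma is genuinely an algebraic splitting statement.)
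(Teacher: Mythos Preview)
Your proof is correct and follows essentially the same approach as the paper: both define $L=\{h-\rho(h):h\in H\}$ as the image of $H$ under $\mathrm{Id}-\rho$, verify $L\cong H$ and $L\cap K=\{0\}$, and then build the retraction onto $L\oplus K$ as the sum of $(\mathrm{Id}-\rho)\circ\lambda$ (your $\psi\circ\lambda$, the paper's $\tau$) and $\rho$. The only cosmetic difference is that the paper checks $L\cap K=\{0\}$ directly from $H\cap K=\{0\}$ rather than by applying $\lambda$, but the arguments are otherwise identical.
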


\begin{proof} Consider the homomorphism $Id-\rho: G\rightarrow G$.  Let $L=\{h-\rho(h)| h\in H\}$ be the image of $H$
under this homomorphism. We first note that $(Id-\rho)|_H: H\rightarrow L$ is an isomorphism. It is certainly a 
surjection. Now suppose that $h-\rho(h)=0$ for some $h\in H$. Then $h=\rho(h)\in K$,
which forces $h\in H\cap K=\{0\}$, and hence $h=0$. This shows that $Id-\rho|_H$ is also an injection. So we now
know that $H\cong L$.  Next we observe that $L\cap K=\{0\}$. To see this, take any 
$h-\rho(h)\in L\cap K$. Then since 
$\rho(h)\in K$, we must also have $h\in K$. But then $h\in H\cap K = \{0\}$, forcing $h=0$ and hence $h-\rho(h)=0$.

Now define $\tau:=(Id-\rho)\circ\lambda: G\rightarrow L$. For any $h-\rho(h)\in L$, we have
$$\tau(h-\rho(h))=(Id-\rho)(\lambda(h-\rho(h)))=(Id-\rho)(h)=h-\rho(h)$$
where the second equality holds because $\lambda(h)=h$ (since $h\in H$ and $\lambda$ is a retraction onto $H$) 
and $\lambda(\rho(h))=0$ (since $\rho(h)\in K$ and $\lambda(K)=\{0\}$ by hypothesis). This verifies that the map 
$\tau: G\rightarrow L$ is a retraction. Clearly $\tau(K)=\{0\}$ because $\lambda(K)=0$.  We finally note that 
$\rho(L)=0$, because $\rho(h-\rho(h))=\rho(h)-\rho(h)=0$. We thus have two orthogonal retractions $\tau$ and $\rho$. 
Now define $\sigma: G\rightarrow L\oplus K$ by $\sigma(x)=(\tau(x), \rho(x))$. Since $L\cap K=\{0\}$ and $\tau, \rho$ 
are orthogonal, one easily checks that $\sigma$ is a retraction. Hence $L\oplus K$ is a direct summand of $G$, which 
proves the lemma.
\end{proof}


\subsection{Proof of Theorem B} We are now ready to prove {\bf Theorem B}. In order to simplify the notation,
we will simply write $V_m$ for $K_j(V_m)$, and use a similar convention for $F_m$ and $G_m$.

\vskip 10pt

\noindent \underline{Case $i\geq 1$.}
We first consider the case when $i\geq 1$, and recall that $NK_i(R, \alpha) \cong Nil_{i-1}(R; \alpha^{-1})$. 
Let $H<Nil_{i-1}(R;\alpha^{-1})$ be a finite subgroup. According to Lemma 
\ref{farrell-lemma}(3), since $H$ is finite, there exists an integer $r(H) = \max_{x\in H}\{r(x)\}$, so that 
$F_m(H)=0$ for all $m>r(H)$.  Let 
$S\subset\mathbb N$  consist of all natural number $m>r(H)$ such that GCD($1+mn, |H|$)=1.  $S$ contains every
multiple of $|H|$ which is greater than $r(H)$, so is an infinite set. Consider the morphisms
$$\xymatrix{
Nil_{i-1}(R;\alpha^{-1})\ar[r]^-{V_m}& Nil_{i-1}(R; \alpha^{-1})\ar[r]^-{F_m}& Nil_{i-1}(R;\alpha^{-1})}
$$
so that the composite is the morphism $G_m$, and define the subgroup $H_m\leq Nil_{i-1}(R; \alpha^{-1})$ 
to be $H_m:=V_m(H)$. 
By the defining property of the set $S$, we have that for $m\in S$, $(\mu_{2+mn}-G_m)\circ G_m=\mu_{1+mn}$ is an 
isomorphism when restricted to $H$. Hence $G_m$ is a monomorphism when restricted to $H$, forcing $V_m$ to
also be a monomorphism when restricted to $H$. So for all $m\in S$, we see that  $H_m\cong H$.

We now claim that there is an $m\in S$, so that $H_m\cap H=\{0\}$. Assume not. Then for all $m\in S$, 
$H_m\cap H\ne\{0\}$. Since $H$ contains only finitely many non-zero elements, and $S$ is an infinite set, there is a 
non-zero $x\in H$ and an infinite subset $S'\subset S$, such that $x\in H_m\cap H$ holds for all $m\in S'$.  For each 
$m\in S'$, there is $y_m\in H$ so that $V_m(y_m)=x$.  Again, $H$ is finite, so we can find a single non-zero $y\in H$ 
and an infinite subset $S''\subset S$ with the property that for all $m\in S''$, we have $V_m(y)=x$. Applying 
$(\mu_{2+mn}-G_m)F_m$ to this equation, we obtain an infinite set of indices $m$ with the property that 
$$((\mu_{2+mn}-G_m)F_mV_m)(y)=(\mu_{2+mn}-G_m)(F_m(x))$$
Therefore 
$$(\mu_{2+mn}-G_m)(F_m(x))=(1+mn)y$$
The right hand side of the equation is non-zero for all $m\in S''$, since $y\in H$ and GCD($1+mn, |H|$)=1. 
But since  $F_m(H)=0$ for all $m\in S$, the left hand 
side vanishes, which gives us a contradiction. We conclude that there must be an $m$ so that $H_m\cap H=\{0\}$ and 
$H_m\cong H$. Hence $H\oplus H<Nil_{i-1}(R;\alpha^{-1})$. Applying the same argument to $H\oplus H$ and so on, 
we conclude $\oplus_{\infty}H<Nil_{i-1}(R;\alpha^{-1})$. 

Next we claim that, if the original subgroup $H$ was a direct summand in $Nil_{i-1}(R;\alpha^{-1})$, then we can find a copy of 
$H\oplus H$ is also a direct summand in $Nil_{i-1}(R;\alpha^{-1})$, and which {\it extends} the original direct summand (i.e. the first copy of $H$
inside the direct summand $H\oplus H$ coincides with the original $H$).

To see this, let us assume $H<Nil_{i-1}(R;\alpha^{-1})$ is a direct summand, so there exists a 
retraction $\rho: Nil_{i-1}(R;\alpha^{-1})\rightarrow H$.  
Let $H_m$ be obtained as above.  We first construct a retraction $\lambda: Nil_{i-1}(R;\alpha^{-1})\rightarrow H_m$. 
Recall that $\mu_{1+mn}$ is an isomorphism on $H_m$, so there exists an integer $l$ so that 
$\mu_{l}\circ\mu_{1+mn}$ is the identity on $H_m$.  We define $\lambda: Nil_{i-1}(R;\alpha^{-1})\rightarrow H_m$ to be 
the composition of the following maps:
$$\xymatrix{
Nil_{i-1}(R;\alpha^{-1})\ar[r]^-{F_m} & Nil_{i-1}(R;\alpha^{-1})\ar[rr]^-{\mu_{2+mn}-G_m} & & Nil_{i-1}(R;\alpha^{-1})\ar[r]^-{\rho}
& H \ar[r]^-{V_m|_{H}} & H_m\ar[r]^-{\mu_{l}} & H_m
}$$
We claim  $\lambda$ is a retraction. 
Note for $x\in H_m$, there exists $y\in H$ with $V_m(y)=x$. We now evaluate
\begin{align*}
\lambda(x) &=(\mu_l\circ V_m\circ\rho\circ(\mu_{2+mn}-G_m)\circ F_m)(x)\\
&=(\mu_l\circ V_m\circ\rho\circ(\mu_{2+mn}-G_m)\circ F_m)(V_m(y))\\
&=(\mu_l\circ V_m\circ\rho\circ(\mu_{2+mn}-G_m)\circ G_m)(y)\\
&=(\mu_l\circ V_m\circ\rho\circ((2+mn)G_m-G_m^2)))(y)\\
&=(\mu_l\circ V_m\circ\rho\circ\mu_{1+mn})(y)\\
&=\big(\mu_l\circ \mu_{1+mn}\big)\big( (V_m\circ\rho)(y)\big)\\
&=\big(\mu_l\circ \mu_{1+mn}\big)\big(V_m(y)\big)\\
&=(\mu_l\circ \mu_{1+mn})(x)\\
&=x
\end{align*}
This verifies $\lambda$ is a retraction. Note also that $\lambda(H)=0$, since $F_m(H)=0$ follows from the fact that
$m \in S$ (recall that integers in $S$ are larger than $r(H)$). 
Hence we are in the situation of Lemma \ref{splitting-lemma}, and we can conclude that $H\oplus H$ also arises as a 
direct summand of $Nil_{i-1}(R;\alpha^{-1})$. Note that, when applying our Lemma \ref{splitting-lemma}, we replaced
the {\it second copy} $H_m$ of $H$ by some other (isomorphic) subgroup, but kept the {\it first copy} of $H$ to be 
the original $H$.
Hence the direct summand $H\oplus H$ does indeed extend the original summand $H$.
Iterating the process, we obtain that $\oplus_{\infty}H$ is a direct 
summand of $Nil_{i-1}(R;\alpha^{-1})$. This completes the proof of {\bf Theorem B} in the case where $i\ge 1$.

\vskip 10pt

\noindent \underline{Case $i\leq 1$.} Next, let us consider the case of the Farrell Nil-groups $NK_i(R, \alpha^{-1})$ 
where $i\leq 1$. For these, the proof of {\bf Theorem B} proceeds via a  (descending) induction on $i$, with the 
case $i=1$ having been established above.

We remind the reader of the standard technique for extending results known for $K_1$ to lower $K$-groups. Take
the ring $\Lambda \mathbb Z$ consisting of all $\mathbb N \times \mathbb N$ 
matrices with entries in $\mathbb Z$ which contain only finitely
many non-zero entries in each row and each column, and quotient out by the ideal $I \triangleleft \Lambda \mathbb Z$
consisting of all matrices
which vanish outside of a finite block. This gives the ring $\Sigma \mathbb Z = \Lambda \mathbb Z \slash I$, and 
we can now define the {\it suspension functor} on the category of rings by tensoring with the ring $\Sigma \mathbb Z$,
i.e. sending a ring $R$ to the ring 
$\Sigma (R):=\Sigma \mathbb Z \otimes R$, and a morphism $f: R\rightarrow S$ to the morphism $Id\otimes f :
\Sigma(R) \rightarrow \Sigma(S)$. The functor $\Sigma$ has the property that there are natural 
isomorphisms $K_{i}(R) \cong K_{i+1}(\Sigma(R))$ (for all $i\in \mathbb Z$). Moreover, there is a natural 
isomorphism $\Sigma \big( R_\alpha[t]\big) \cong (\Sigma R)_{Id\otimes \alpha}[t]$, which induces a 
commutative square
$$
\xymatrix{K_i(R_{\alpha}[t]) \ar[d]^ {\cong} \ar[r] & K_i(R) \ar[d]^{\cong} \\
K_{i+1}\big( (\Sigma \mathbb Z \otimes R)_{Id\otimes \alpha}[t]\big) \ar[r] & K_{i+1}(\Sigma \mathbb Z \otimes R)
}
$$
By induction, for each $m\in \mathbb N$, this allows us to identify $NK_{1-m}(R, \alpha)$ with 
$NK_1(\Sigma^m R, Id^{\otimes m} \otimes \alpha)$, where $\Sigma^m$
denotes the $m$-fold application of the functor $\Sigma$. Obviously, if the automorphism $\alpha$
has finite order in $\text{Aut}(R)$, the induced automorphism $Id^{\otimes m} \otimes \alpha$ will have
finite order in $\text{Aut}\big( (\Sigma \mathbb Z)^{ \otimes m} \otimes R\big)$.
So for the Farrell Nil-groups $NK_i(R, \alpha)$ with $i\leq 0$, the 
result immediately follows from the special case of $NK_1$ considered above. This completes the proof
of {\bf Theorem B}.


\section{A structure theorem and Nils associated to virtually cyclic groups}\label{Nils-VC}

In this section, we discuss some applications and establish {\bf Theorem C} and {\bf Theorem D}.
For a general ring $R$, we know by {\bf Theorem A} that a non-trivial Nil-group is an {\it infinitely generated} abelian 
group. While finitely generated abelian groups have a very nice structural theory, the
picture is much more complicated in the infinitely generated case (the reader can consult \cite[Chapter 4]{Ro} for
an overview of the theory). 
If one restricts to abelian (torsion) groups of {\it finite exponent}, then it is an old result of Pr\"ufer  \cite{pruf}
that any such group is a direct sum of cyclic groups (see \cite[item 4.3.5 on pg. 105]{Ro} for a proof). 


\subsection{Proof of Theorem C} \label{finite}

We can now explain how our {\bf Theorem B} allows us to obtain a structure theorem for certain Nil-groups.  Let $R$ be a countable ring and $\alpha: R\rightarrow R$ be an automorphism
of finite order. 
Then by Proposition \ref{countable} of the Appendix, we know that $NK_i(R,\alpha)$ is a countable group. 
If in addition $NK_i(R,\alpha)$ has finite exponent, then by the result of Pr\"ufer mentioned above, it follows that 
$NK_i(R, \alpha)$
decomposes as a countable direct sum of cyclic groups of prime power order, each of which appears with some
multiplicity.  In view of our {\bf Theorem B}, any summand which occurs
must actually occur infinitely many times. Since the exponent of the Nil-group is finite, 
there is an upper bound on the prime power orders that can appear, and hence there are only finitely many
possible isomorphism types of summands. Let $H$ be the direct sum of a single copy of each 
cyclic group of prime power order which appear as a summand in $NK_i(R, \alpha)$. It follows
immediately that
$\bigoplus_{\infty} H\cong NK_i(R, \alpha)$. This completes the proof of {\bf Theorem C}.



\subsection{Farrell-Jones Isomorphism Conjecture}

In applications to geometric topology, the rings of interest are typically integral group rings $\mathbb ZG$. For
computations of the $K$-theory of such groups, the key tool is provided by the ($K$-theoretic) {\it Farrell-Jones 
Isomorphism Conjecture} \cite{fj}. Davis and L\"uck \cite{DL} gave a general framework for the formulations of various 
isomorphism conjectures. In particular, they constructed for any group $G$, an Or$G$-spectrum, i.e. a functor $\textbf K_{\mathbb Z}: \text{Or}G\rightarrow \textbf{Sp}$, where Or$G$ is the orbit category of $G$ (objects are cosets $G/H, H<G$ and morphisms are $G$-maps) and $\textbf {Sp}$ is the category of spectra. This functor has the property that $\pi_n(\textbf K_{\mathbb Z}(G/H))=K_n(\mathbb ZH)$. As 
an ordinary spectrum can be used to construct a generalized homology theory, this Or$G$-spectrum $\textbf K_{\mathbb Z}$ was used to construct  a $G$-equivariant homology theory $H^G_*(-; \textbf K_{\mathbb Z})$.  It has the property that $H_n^G(G/H; \textbf K_{\mathbb Z})=\pi_n(\textbf K_{\mathbb Z}(G/H))=K_n(\mathbb ZH)$ (for all $H<G$ and $n\in \mathbb Z$). In particular, on a point, $H^G_n(*; \textbf K_{\mathbb Z})=H^G_n(G/G; \textbf K_{\mathbb Z})=K_n(\mathbb ZG)$. Applying this homology theory to any $G$-CW-complex $X$, the 
obvious $G$-map $X\rightarrow *$ gives rise to an {\it assembly map}:
$$H_n^G(X; \textbf K_{\mathbb Z})\rightarrow H_n^G(*; \textbf K_{\mathbb Z})\cong K_n(\mathbb ZG).$$
The Farrell-Jones isomorphism conjecture asserts that, when the space $X$ is a model for the classifying space for
$G$-actions with isotropy in the virtually cyclic subgroups of $G$, then the above assembly map is an isomorphism.
Thus, the conjecture roughly predicts that the $K$-theory of an integral group ring $\mathbb ZG$ is
determined by the $K$-theory of the integral group rings of the virtually cyclic subgroups of $G$, assembled
together in some homological fashion. 

In view of this
conjecture, one can view the $K$-theory of virtually cyclic groups as the ``basic building blocks'' for the $K$-theory
of general groups. Focusing on such a virtually cyclic group $V$, one can consider the portion of the $K$-theory
that comes from the finite subgroups of $V$. This would be the image of the assembly map:
$$H_n^V(\underbar EV; \textbf K_{\mathbb Z})\rightarrow H_n^V(*; \textbf K_{\mathbb Z})\cong K_n(\mathbb ZV)$$
where $\underbar EV$ is a model for the classifying space for proper $V$-actions. While this map is always 
split injective (see \cite{B}), it is not surjective in general. Thus to understand the 
$K$-theory of a virtually cyclic group, we need to understand the $K$-theory of finite groups, and to understand the
cokernels of the above assembly map. The cokernels of the above assembly map can also be interpreted as the 
obstruction to reduce the family of virtually cyclic groups used in the 
Farrell-Jones isomorphism conjecture to the family of finite groups - this is the {\it transitivity principle} (see
\cite[Theorem A.10]{fj}).   
Our {\bf Theorem D} gives some structure for the cokernel of the 
assembly map.


\subsection{Proof of Theorem D} Let $V$ be a virtually cyclic group. Then one has that $V$ is either of the form 
(i) $V=G\rtimes _\alpha \mathbb Z$, where $G$ is a finite group and $\alpha \in \text{Aut}(G)$, or is of the form
(ii) $V= G_1*_HG_2$, where $G_i$, $H$ are finite groups and $H$ is of index two in both $G_i$. 

Let us first consider case (i). In this case, the integral group
ring $\mathbb Z[V]$ is isomorphic to the ring $R_{\hat \alpha}[t,t^{-1}]$, the $\hat \alpha$-twisted ring of 
Laurent polynomials
over the coefficient ring $R=\mathbb Z[G]$, where $\hat \alpha \in \text{Aut}(\mathbb Z[G])$ is the ring automorphism
canonically induced by the group automorphism $\alpha$. Then it is known (see \cite[Lemma 3.1]{DQR}) that the 
cokernel we are interested in consists of the direct sum of the Farrell Nil-group $NK_n(\mathbb ZG, \hat \alpha)$
and the Farrell Nil-group $NK_n(\mathbb ZG, \hat \alpha^{-1})$. Applying {\bf Theorem C}  and Corollary \ref{nk0} 
to these two Farrell Nil-groups, we are done.

In case (ii), we note that $V$ has a canonical surjection onto the infinite dihedral group 
$D_\infty =\mathbb Z_2*\mathbb Z_2$, obtained by surjecting each $G_i$ onto $G_i/H \cong \mathbb Z_2$. Taking
the preimage of the canonical index two subgroup $\mathbb Z \leq D_\infty$, we obtain a canonical index two
subgroup $W\leq V$. The subgroup $W$ is a virtually cyclic group of type (i), and is of the form 
$H\rtimes _\alpha \mathbb Z$, where $\alpha \in \text{Aut}(H)$. Hence it has associated Farrell Nil-groups
$NK_n(\mathbb ZH, \hat \alpha)$. 

The cokernel of the relative assembly map for the group $V$ is a {\it Waldhausen Nil-group} associated to the 
splitting of $V$ (see \cite[Lemma 3.1]{DQR}). It was recently shown that this Waldhausen Nil-group is always 
isomorphic to a single copy of 
the Farrell Nil-group $NK_n(\mathbb ZH, \hat \alpha)$ associated to the canonical index two subgroup $W\leq V$
(see for example \cite{DKR}, \cite{DQR}, or for an earlier result in a similar vein \cite{LO}). 
Again, combining this with our {\bf Theorem C} and Corollary \ref{nk0}, we are done, completing the proof of 
{\bf Theorem D}.


\section{Applications and Concluding Remarks}\label{concluding}

We conclude this short note with some further applications and remarks. 


\subsection{Waldhausen's A-theory}

Recall that Waldhausen \cite{Wa} introduced a notion of algebraic $K$-theory $A(X)$ of a topological 
space $X$.
Once the $K$-theoretic contribution has been split off, one is left with the finitely dominated version of the algebraic
$K$-theory $A^{fd}(X)$. This finitely dominated version satisfies the ``fundamental theorem of algebraic $K$-theory'',
in that one has a homotopy splitting:
\begin{equation}\label{A-theory}
A^{fd}(X\times S^1) \simeq A^{fd}(X) \times BA^{fd}(X) \times NA_+^{fd}(X) \times NA_-^{fd}(X)
\end{equation}
see \cite{HKVWW} (the reader should compare this with the corresponding fundamental theorem of algebraic 
$K$-theory for rings, see \cite{Gr}). The Nil-terms appearing in this splitting have been studied by Grunewald,
Klein, and Macko \cite{GKM}, who defined Frobenius and Verschiebung operations, $F_n, V_n$, on the 
homotopy groups  $\pi_*\big(NA_{\pm}^{fd}(X)\big)$. In particular, they show that the composite $V_n\circ F_n$ 
is multiplication by $n$ \cite[Proposition 5.1]{GKM}, and that for any element $x\in \pi_i\big(NA_{\pm}^{fd}(X)\big)$ 
of finite order, one has $F_n(x)=0$ for all sufficiently large $n$ (see the discussion in 
\cite[pg. 334, Proof of Theorem 1.1]{GKM}). Since these two properties are the only ones used in our proofs,
an argument identical to the proof of {\bf Theorem B} gives the:

\begin{prop}
Let $X$ be an arbitrary space, and let $NA_{\pm}^{fd}(X)$ be the associated Nil-spaces arising in the 
fundamental theorem of algebraic $K$-theory of spaces. Then if $H \leq \pi_i\big(NA_{\pm}^{fd}(X)\big)$ is any
finite subgroup, then
$$\bigoplus _\infty H \leq \pi_i\big(NA_{\pm}^{fd}(X)\big).$$ 
Moreover, if $H$ is a direct summand in $\pi_i\big(NA_{\pm}^{fd}(X)\big)$, then so is $\bigoplus _\infty H$.
\end{prop}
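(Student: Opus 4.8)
The plan is to mimic, essentially verbatim, the proof of \textbf{Theorem B} given in Section \ref{fin-sgps}, but with the Nil-category $NIL(R;\alpha)$ and its $K$-groups replaced by the Nil-space $NA_{\pm}^{fd}(X)$ and its homotopy groups $\pi_i\big(NA_{\pm}^{fd}(X)\big)$. The key observation is that the proof of \textbf{Theorem B} used only three facts about the ambient group and its operators: (a) the Frobenius and Verschiebung operators $F_m, V_m$ are endomorphisms of the abelian group in question (here $\pi_i\big(NA_{\pm}^{fd}(X)\big)$); (b) the composite $G_m = F_m \circ V_m$ satisfies an identity of the shape $(2+mn)G_m - G_m^2 = \mu_{1+mn}$ for suitable $n$; and (c) for every element $x$ of finite order, $F_m(x) = 0$ for all sufficiently large $m$. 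Grunewald, Klein, and Macko supply exactly these ingredients in \cite{GKM}: they construct $F_n, V_n$ on $\pi_*\big(NA_{\pm}^{fd}(X)\big)$, prove $V_n \circ F_n = \mu_n$ (which plays the role of, and is in fact stronger than, our Lemma \ref{farrell-lemma}(2) — one can just take $n=1$ in the argument, or use $\mu_n$ directly), and establish the vanishing statement (c). So the first step is to record these three inputs from \cite{GKM} and note that they suffice.

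Next I would run the splitting argument. Fix a finite subgroup $H \leq \pi_i\big(NA_{\pm}^{fd}(X)\big)$. Using property (c), choose $r(H)$ so that $F_m(H) = 0$ for all $m > r(H)$, and let $S$ be the infinite set of integers $m > r(H)$ with $\gcd(m, |H|) = 1$ (using $V_n \circ F_n = \mu_n$, the relevant quantity is $m$ rather than $1+mn$; the argument is otherwise unchanged). Set $H_m := V_m(H)$. Since $F_m \circ V_m = \mu_m$ is an isomorphism on $H$ for $m \in S$, the map $V_m$ is injective on $H$, so $H_m \cong H$. The same pigeonhole argument as in Theorem B — exploiting that $H$ is finite while $S$ is infinite — produces some $m \in S$ with $H_m \cap H = \{0\}$; the contradiction is obtained by applying $F_m$ to an equation $V_m(y) = x$ with $0 \neq x \in H_m \cap H$, $y \in H$, yielding $0 = F_m(x) = F_m(V_m(y)) = \mu_m(y) = my \neq 0$. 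Iterating gives $\bigoplus_\infty H \leq \pi_i\big(NA_{\pm}^{fd}(X)\big)$.

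For the direct summand statement, suppose $H$ is a direct summand with retraction $\rho$. As in Theorem B, I would build a retraction $\lambda$ onto $H_m$ as a composite $\mu_l \circ V_m|_H \circ \rho \circ F_m$, where $\mu_l \circ \mu_m = \mathrm{id}$ on $H_m$; the verification that $\lambda$ is a retraction is the same chain of equalities, now using $F_m \circ V_m = \mu_m$ in place of the more complicated identity. Since $F_m(H) = 0$, we get $\lambda(H) = 0$, so Lemma \ref{splitting-lemma} applies and yields $H \oplus H$ as a direct summand extending the original $H$. Iterating gives $\bigoplus_\infty H$ as a direct summand.

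The proof is essentially a translation exercise, so there is no serious obstacle; the one point requiring a little care is making sure that the operators $F_m, V_m$ of \cite{GKM} are genuinely group endomorphisms of $\pi_i\big(NA_{\pm}^{fd}(X)\big)$ (rather than merely maps of spaces, which would only give set-level statements) and that the identity $V_n \circ F_n = \mu_n$ holds on all of $\pi_i$, not just on a summand — but this is precisely what \cite[Proposition 5.1]{GKM} provides. A secondary bookkeeping point is that \cite{GKM} works with a parameter $n$ (the "degree" of the Frobenius/Verschiebung) whereas our Theorem B carried the extra factor $1+mn$ coming from the order of the ring automorphism; since here $V_n \circ F_n = \mu_n$ literally, one simply substitutes $\mu_m$ for $\mu_{1+mn}$ throughout and the arithmetic simplifies.
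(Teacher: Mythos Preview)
Your proposal is correct and matches the paper's approach exactly: the paper's own proof is simply the observation that the two properties from \cite{GKM} are all that the proof of \textbf{Theorem B} uses, so ``an argument identical to the proof of \textbf{Theorem B}'' applies---and you have spelled that argument out in more detail than the paper does, with the natural simplification coming from the cleaner identity. One minor point of consistency: you first quote the relation as $V_n\circ F_n=\mu_n$ (echoing the paper's text) but then use $F_m\circ V_m=\mu_m$ in the actual argument; the latter is what your argument needs (and is the standard Frobenius--Verschiebung relation), so state it that way throughout.
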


\begin{rem}
An interesting question is whether there exists a ``twisted'' version of the splitting in equation (\ref{A-theory}),
which applies to bundles $X\rightarrow W\rightarrow S^1$ over the circle (or more generally, to approximate 
fibrations over the circle), and provides a homotopy splitting
of the corresponding $A^{fd}(W)$ in terms of spaces attached to $X$ and the holonomy map. 
\end{rem}


\subsection{Cokernels of assembly maps}

For a general group $G$, one would expect from the Farrell-Jones isomorphism Conjectures that the
cokernel of the relative assembly map for $G$ should be ``built up'', in a homological manner, from the cokernels
of the relative assembly maps of the various virtually cyclic subgroups of $G$ (see for example \cite{LO2} for
an instance of this phenomenon). In view of our {\bf Theorem D}, the following question seems relevant:

\vskip 7pt

\noindent {\bf Question:} Can one find a group $G$, an index $i\in \mathbb Z$, and a finite subgroup $H$, with
the property that $H$ embeds in $ \text{CoKer}\Big( h_i^G(\underbar EG) \rightarrow K_i(\mathbb Z[G])\Big)$, but $\bigoplus_\infty H$ does not?

\vskip 6pt

In other words, we are asking whether contributions from the various Nil-groups of the virtually cyclic subgroups
of $G$ could {\it partially cancel out in a cofinite manner}. Note the following special case of this question: is
there an example for which this cokernel is a non-trivial finite group?


\subsection{Exotic Farrell Nil-groups}

Our {\bf Theorem C} establish that,  for a countable \textit{tame} ring,  meaning the associated Farrell Nil-groups 
has finite exponent, the associated Farrell Nil-groups, while infinitely generated, still 
remain reasonably well behaved, i.e. are countable direct sums of a fixed finite group. In contrast, for a general 
ring $R$ (or
even, a general integral group ring $\mathbb ZG$), all we know about the non-trivial Farrell Nil-groups is that they 
are infinitely generated abelian groups. Of course, the possibility of having infinite exponent {\it a priori} allows for 
many strange possibilities, e.g.
the rationals $\mathbb Q$, or the Pr\"ufer $p$-group $\mathbb Z(p^\infty)$ consisting of all complex $p^i$-roots
of unity ($i\geq 0$). We can ask:

\vskip 10pt

\noindent {\bf Question:} Can one find a ring $R$, automorphism $\alpha \in \text{Aut}(R)$, and $i\in \mathbb Z$, 
so that $NK_i(R, \alpha) \cong \mathbb Q$? How about $NK_i(R, \alpha) \cong \mathbb Z(p^\infty)$? What about
if we require the ring to be an integral group ring $R=\mathbb ZG$?

\begin{rem} Grunewald \cite[Theorem 5.10]{gr-topology} proved that  for every group $G$ and every group
automorphism $\alpha$ of finite order,  $NK_i(\mathbb{Q}G, \alpha)$ is a vector space over the rationals after killing torsion elements for all $i\in\mathbb Z$. However this still leaves the possibility that they may vanish. 
\end{rem}

Or rather, in view of our results, the following question also seems natural:

\vskip 10pt

\noindent {\bf Question:} What conditions on the ring $R$, automorphism $\alpha \in \text{Aut}(R)$, and 
$i\in \mathbb Z$, are sufficient to ensure $NK_i(R, \alpha)$ is a torsion group of finite exponent?  Does $NK_i(\mathbb Z G; \alpha)$ have finite exponent for all polycyclic-by-finite
groups when $\alpha$ is of finite order?

\vskip 10pt

Finally, while this paper completes our understanding of the finiteness properties of Farrell Nil-groups 
associated with {\it finite order} ring automorphisms, nothing seems to be known about the Nil-groups
associated with {\it infinite order} ring automorphisms. This seems like an obvious direction for further research.

\section{Appendix}
In this appendix, we give a short discussion on the cardinality of Nil-groups. The following proposition is needed 
in the proof of our {\bf Theorem C} -- while presumably well-known to experts, we were unable to find it in the
literature.

\begin{prop}\label{countable} Let $R$ be a countable ring and $\alpha: R\rightarrow R$ be a ring automorphism.  Then the groups $K_i(R)$ and $NK_i(R;\alpha )$ are countable for all $i\in\mathbb Z$. 
\end{prop}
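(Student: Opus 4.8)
The plan is to reduce everything to the statement that $K_i(S)$ is countable whenever $S$ is a countable ring, and then to observe that all the rings and constructions in sight preserve countability. First I would recall that for a countable ring $S$, each group $K_i(S)$ with $i \geq 1$ is countable: $K_1(S)$ is a quotient of $GL(S) = \varinjlim GL_n(S)$, which is a countable union of countable sets, hence countable; for higher $i$, one uses that $K_i(S) = \pi_i(BGL(S)^+)$ and that $BGL(S)$ has the homotopy type of a CW-complex with countably many cells in each dimension (since $GL(S)$ is a countable group), so its Quillen plus construction does too, and the homotopy groups of a countable CW-complex are countable. For $i \leq 0$, one invokes the Bass construction $K_{-m}(S) = \mathrm{coker}\big(K_{1-m}(S[t]) \oplus K_{1-m}(S[t^{-1}]) \to K_{1-m}(S[t,t^{-1}])\big)$ together with the fact that $S[t]$, $S[t^{-1}]$ and $S[t,t^{-1}]$ are again countable, so by (downward) induction all the $K_{1-m}$ groups on the right are countable, and hence so is the cokernel. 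This establishes countability of $K_i(S)$ for all $i \in \mathbb{Z}$ and all countable $S$.

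Next I would apply this to the relevant rings. The twisted polynomial ring $R_\alpha[t]$ is countable whenever $R$ is (it is $R[t]$ as a set), so $K_i(R_\alpha[t])$ is countable by the previous paragraph. Since $NK_i(R;\alpha) = \ker\big(\varepsilon_* : K_i(R_\alpha[t]) \to K_i(R)\big)$ is a subgroup of $K_i(R_\alpha[t])$, it is countable as well. This handles all $i \in \mathbb{Z}$ simultaneously, and in particular covers the negative-dimensional case without any separate argument.

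The only genuine point requiring care — and the step I expect to be the main obstacle — is the claim that the plus construction of a countable CW-complex has countable homotopy groups; one must be slightly careful because the plus construction is defined by attaching cells to kill a perfect normal subgroup and the resulting space must still be seen to be (homotopy equivalent to) a countable CW-complex. This follows because the perfect subgroup $E(S) \trianglelefteq GL(S)$ is countable, so only countably many $2$-cells and $3$-cells are attached, keeping the complex countable; then one uses the standard fact (e.g. via the Hurewicz theorem and the structure of the universal cover, or by a direct cellular-chain count) that a connected CW-complex with countably many cells has finitely generated — a fortiori countable — homotopy groups over a countable fundamental group, hence countable homotopy groups. Alternatively, one can sidestep this entirely by expressing $K_i(S)$ for $i \geq 1$ via Quillen's $Q$-construction applied to the (countable) category of finitely generated projective $S$-modules, whose nerve is again a countable simplicial set, and invoking the same countability principle for homotopy groups of countable simplicial sets. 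Either route makes the argument complete; I would write up the $Q$-construction version since it is uniform and avoids the plus-construction technicalities, and then simply remark that the identical reasoning, combined with the isomorphism $K_i(R) \cong K_{i+1}(\Sigma R)$ used in the proof of Theorem B, gives an alternate handling of the lower $K$-groups if one prefers to avoid Bass's fundamental theorem.
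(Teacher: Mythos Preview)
Your overall strategy matches the paper's exactly: reduce $NK_i(R,\alpha)$ to $K_i$ of the countable ring $R_\alpha[t]$, handle $i\geq 1$ via the plus construction on $BGL(R)$, and handle lower $K$-groups by the Bass cokernel formula. The paper treats $K_0$ separately via idempotent matrices rather than folding it into the Bass induction, but your version is fine since the cokernel description is valid there too.

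There is one genuine slip in the justification you singled out as delicate. A countable CW-complex need \emph{not} have finitely generated homotopy groups, not even as $\mathbb Z[\pi_1]$-modules: already $S^1\vee S^2$ has $\pi_2\cong \mathbb Z[t,t^{-1}]$, which is not finitely generated as an abelian group, and for countable (as opposed to finite) complexes one cannot even expect finite generation over $\mathbb Z[\pi_1]$ since the cellular chain complex of the universal cover need not be finitely generated. So the parenthetical ``finitely generated --- a fortiori countable'' is false; you only need \emph{countable}, and that requires a different argument. The paper's route is to filter by finite subcomplexes, pass to finite simplicial complexes, and then invoke simplicial approximation: every class in $\pi_i(X)$ is represented by a simplicial map from some iterated barycentric subdivision of a fixed triangulation of $S^i$, and there are only countably many such maps. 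Your proposed $Q$-construction alternative (the nerve of the $Q$-category on finitely generated projectives over a countable ring is a countable simplicial set, hence has countable homotopy groups) is equally valid and arguably cleaner; either one repairs the gap.
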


\begin{proof} Since $NK_i(R;\alpha)$ is a subgroup of $K_i(R_{\alpha}[t])$ and $R_{\alpha}[t]$ is countable when 
$R$ is countable, it is enough to show $K_i(R)$ is countable when $R$ is countable. So let us focus on $K_i(R)$. 

\vskip 5pt

We first use Quillen's +-construction to treat the case where $i\ge 1$.  Consider the infinite general linear group 
$GL(R)$. Being the countable union of countable groups $GL_n(R)$ ($n\in\mathbb N$), we see that $GL(R)$
is countable. Hence the simplicial complex
spanned by the group elements of $GL(R)$, which is a model for the universal space for free $GL(R)$-actions, is
also countable. Then the quotient $BGL(R)$ is of course a countable $CW$-complex. Performing Quillen's 
$+$-construction to $BGL(R)$, we obtain the algebraic $K$-theory space $BGL(R)^+$ with 
$K_i(R):=\pi_i(BGL(R)^+)$, for $i\ge 1$. Note that $BGL(R)^+$ is obtained from $BGL(R)$ by attaching 2-cells 
and 3-cells indexed by some generating set of the commutator subgroup of $GL(R)$, hence $BGL(R)^{+}$ is 
a countable $CW$-complex.  More details of Quillen's +-construction can be found, for example, in 
\cite[Theorem 5.2.2]{Ros}.  

We now show the homotopy groups of a countable $CW$-complex is countable.
By filtering a countable $CW$-complex by its countably many finite subcomplexes, it suffices to show homotopy 
groups of finite $CW$-complexes are countable. So let us assume $X$ is a finite $CW$-complex. Since every finite 
$CW$-complex has the homotopy type of a finite simplicial complex, we may assume $X$  is a finite simplicial 
complex. Fix  a triangulation $\Delta_i$ of $S^i$. The set of all iterated barycentric subdivisions of $\Delta_i$
is countable. Fix a vertex in $\Delta_i$ and a vertex in $X$ as base points. By simplicial approximation, any element 
in $\pi_i(X)$ can be represented by a simplicial map from some iterated barycentric subdivision of $\Delta_i$ to $X$. 
But the set of such simplicial maps is clearly countable, hence $\pi_i(X)$ is countable. Thus $K_i(R)$ is countable 
when $i\ge 1$.

Now let us consider the case when $i<1$. First, we consider $i=0$. Let Idem$(R)$ be the set of idempotent matrices 
in $M(R)$, where $M(R)$ is the union of all $n\times n$ matrices over $R$, ($n\in\mathbb N$).  $GL(R)$ acts on
Idem$(R)$ by conjugation, denote the quotient by Idem$(R)/GL(R)$. This is a semigroup and $K_0(R)$ can be 
identified with the Grothendieck group associated to this semigroup (see \cite[Theorem 1.2.3]{Ros}). Therefore 
$K_0(R)$ is countable since Idem$(R)$
is countable. Now when $i<0$, the negative $K$-groups $K_i(R)$ can be inductively defined to be the cokernel 
of the natural map (see \cite [Definition 3.3.1]{Ros})
$$K_{i+1}(R[t])\oplus K_{i+1}(R[t^{-1}])\rightarrow K_{i+1}R[t,t^{-1}]$$
Note when $R$ is countable, $R[t], R[t^{-1}]$ and $R[t,t^{-1}]$ are all countable. Hence their $K_0$-groups are all 
countable. Thus we inductively have $K_i(R)$ are  countable for all $i<0$.  This completes the proof. 
\end{proof}

\frenchspacing


\begin{thebibliography}{10}


\bibitem{B}
A. C. Bartels, 
{\it On the domain of the assembly map in algebraic K-theory}. 
Algebr. Geom. Topol. {\bf 3} (2003), 1037--1050. 


\bibitem{cp}
F. X. Connolly and S. Prassidis, {\it On the exponent of $NK_0$-groups of virtually infinite cyclic groups}.
Bull. Can. Math. Soc. {\bf 45} (2002), 180--195.

\bibitem{DKR}
J. F. Davis, Q. Khan, and A. Ranicki, {\it Algebraic K-theory over the infinite dihedral group: an algebraic approach}.
Algebr. Geom. Topol. {\bf 11} (2011), 2391--2436.

\bibitem{DL}
J. F. Davis and W. L\"uck, {\it Spaces over a category and assembly maps in isomorphism conjectures in {$K$}- and {$L$}-theory}.
K-Theory {\bf 15} (1998), no. 3, 201-252.

\bibitem{DQR}
J. F. Davis, F. Quinn, and H. Reich, {\it  Algebraic K-theory over the infinite dihedral group: a controlled topology approach}.
J. Topology {\bf 4} (2011), 505--528.


\bibitem{fa}
 F. T. Farrell, {\it The nonfiniteness of Nil}. Proc. Amer. Math. Soc. {\bf 65} (1977), 215--216.

\bibitem{fj}
F. T. Farrell and L. Jones, {\it Isomorphism conjectures in algebraic $K$-theory}. 
J. Amer. Math. Soc. {\bf 6} (1993), 249--297.

\bibitem{fj2}
F. T. Farrell and L. Jones, {\it The lower algebraic K-theory of virtually infinite cyclic groups}.
K-Theory {\bf 9} (1995), 13--30


\bibitem{Gr}
D. Grayson, {\it Higher algebraic K-theory. II (after Daniel Quillen)}, in ``Algebraic K-theory, Proc. Conf.,
Northwestern University, Evanston, IL, 1976''. Lecture Notes in Mathematics {\bf 551} (Springer, Berlin, 1976),
217--240.

\bibitem{Gr2}
D. Grayson, {\it The K-theory of semilinear endomorphisms}, J. Algebra 113 (1988), 358--372.

\bibitem{gr-agt}
J. Grunewald, {\it Non-finiteness results for nil-groups}.
Algebr. Geom. Topol. {\bf 7} (2007), 1979--1986. 


\bibitem{gr-topology}
J. Grunewald, {\it The behavior of Nil-groups under localization and the relative assembly map}. 
Topology {\bf 47} (2008), 160--202.


\bibitem{GKM}
J. Grunewald, J. Klein and T. Macko, {\it Operations on the A-theoretic nil-terms}.
J. Topology {\bf 1} (2008), 317--341.

\bibitem{HKVWW}
T. H\"uttemann, J. Klein, W. Vogell, F. Waldhausen and B. Williams, {\it The fundamental theorem
for the algebraic K-theory of spaces. I}. J. Pure Appl. Algebra {\bf 160} (2001), 21--52.


\bibitem{ka}
W. van der Kallen,  {\it Generators and relations in algebraic K-theory} in ``Proceedings of the International Congress of Mathematicians (Helsinki, 1978)'' (Acad. Sci. Fennica, Helsinki, 1980), 305--310.

\bibitem{kt}
A. Kuku and G. Tang, {\it Higher K-theory of group-rings of virtually infinite cyclic groups}. 
Math. Ann. {\bf 325} (2003), 711--726.

\bibitem{LO}
J.-F. Lafont and I. J. Ortiz, {\it Relating the Farrell Nil-groups to the Waldhausen Nil-groups}.
Forum Math. {\bf 20} (2008), 445--455.

\bibitem{LO2}
J.-F. Lafont and I. J. Ortiz, {\it Splitting formulas for certain Waldhausen Nil-groups}.
J. London Math. Soc. {\bf 79} (2009), 309--322.



\bibitem{pr}
A. V. Prasolov, {\it The nonfiniteness of the group Nil.} (Russian) Mat. Zametki {\bf 32} (1982), 124. English
translation: Math. Notes {\bf 32} (1982), 484--485.

\bibitem{pruf}
H. Pr\"ufer, {\it Untersuchungen \"uber die Zerlegbarkeit der abz\"ahlbaren prim\"aren Abelschen Gruppen}. (German)
Math. Z. {\bf 17} (1923), 35--61.

\bibitem{ra}
R. Ramos, {\it Non-finiteness of twisted nils}.
Bol. Soc. Mat. Mexicana (3) {\bf 13} (2007), 55--64. 

\bibitem{Ro}
D. J. S. Robinson, {\it A Course in the Theory of Groups}, Graduate Texts in Math. \textbf{80}, Springer-Verlag, 1993.


\bibitem{Ros}
J. Rosenberg, {\it Algebraic K-theory and It's applications}, Graduate Texts in Math. \textbf{147}, Springer-Verlag, 1994.




\bibitem{Wa}
F. Waldhausen, {\it Algebraic K-theory of spaces}, in ``Algebraic and geometric topology, New Brunswick, NJ,
1983''. Lecture Notes in Mathematics {\bf 1126} (Springer, Berlin, 1985), 318--419.

\bibitem{we}
C. A. Weibel, {\it $NK_0$ and $NK_1$ of the groups $C_4$ and $D_4$},  
Addendum to ``Lower algebraic K-theory of hyperbolic 3-simplex reflection groups'' by J.-F. Lafont and I. J. Ortiz.
Comment. Math. Helv. {\bf 84} (2009), 339--349

\end{thebibliography}
\end{document}